\numberwithin{equation}{section}
\theoremstyle{plain}
\newtheorem{thm}[equation]{Theorem}
\newtheorem{prop}[equation]{Proposition}
\newtheorem{lem}[equation]{Lemma}
\newtheorem{cor}[equation]{Corollary}
\theoremstyle{definition}
\newtheorem{rem}[equation]{Remark}
\newtheorem*{rem*}{Remark}
\newcommand{\ST}{such that }
\DeclareMathOperator{\Char}{char} 
\DeclareMathOperator{\Herm}{Herm} 
\DeclareMathOperator{\Id}{Id}
\DeclareMathOperator{\Ind}{ind}
\DeclareMathOperator{\Int}{Int}
\DeclareMathOperator{\Nrd}{Nrd}
\DeclareMathOperator{\Per}{per}
\DeclareMathOperator{\Rank}{Rank}
\DeclareMathOperator{\Sym}{Sym}
\DeclareMathOperator{\op}{op}
\begin{document}
\title
{Hermitian $u$-invariants over function fields of $p$-adic curves}
\author{Zhengyao Wu}
\date{} 
\address{
Department of Mathematics and Computer Science\\
Emory University\\
400 Dowman Drive, W401\\
Atlanta, GA 30322}
\email{zwu22@emory.edu}
\subjclass[2010]{Primary: 11E39. Secondary: 14H05, 16W10}
\keywords{hermitian form, u-invariant, p-adic curve}
\begin{abstract}
Let $p$ be an odd prime. 
Let $F$ be the function field of a $p$-adic curve. 
Let $A$ be a central simple algebra of period 2 over $F$ with an involution $\sigma$. 
There are known upper bounds for the u-invariant of hermitian forms over $(A, \sigma)$. 
In this article we compute the exact values of the u-invariant of
hermitian forms over $(A, \sigma)$.
\end{abstract}
\maketitle
\tableofcontents
\section{Introduction}
Let $A$ be a central simple algebra over a field $K$. 
Let $\sigma$ be an involution on $A$. 
Let $k=K^{\sigma}=\{x\in K~|~\sigma(x)=x\}$. 
Suppose $\Char k\ne 2$. 
Suppose $\varepsilon\in\{1,-1\}$. 
If $V$ is a finitely generated right $A$-module and $h: V\times V\to A$ is an $\varepsilon$-hermitian space over $(A,\sigma)$, the \textit{rank} of $h$ is defined to be $\Rank(h)=\dfrac{\dim_K(V)}{\deg(A)\Ind(A)}$. 
Let $\Herm^{\varepsilon}(A,\sigma)$ denote the category of $\varepsilon$-hermitian spaces over $(A,\sigma)$. 
The Hermitian u-invariant \cite[2.1]{Mah} of $(A,\sigma, \varepsilon)$ is defined to be:  
\[
u(A, \sigma, \varepsilon)
=\sup\{n|\text{there exists an anisotropic }h\in\Herm^{\varepsilon}(A,\sigma), \Rank(h)=n.
\}\]

Suppose that $\sigma$ and $\tau$ are involutions on $A$. 
Mahmoudi has proved that \cite[2.2]{Mah} if $\sigma$ and $\tau$ are of the same type, then $u(A, \sigma, \varepsilon) 
= u(A, \tau, \varepsilon)$; 
if $\sigma$ is orthogonal and 
$\tau$ is symplectic, then $u(A, \sigma, \varepsilon) 
= u(A, \tau, -\varepsilon)$; if $\sigma$ is unitary, then $u(A, \sigma, 1)  = 
u(A, \sigma, -1)$.
Thus we have only three types of Hermitian $u$-invariants \cite[2.3]{Mah}, we denote: 
\[u(A, \sigma, \varepsilon)=\left\{
\begin{array}{ll}
u^+(A),&\text{if } \varepsilon=1\text{ and }\sigma \text{ is orthogonal,}\\
&\text{or, } \varepsilon=-1\text{ and }\sigma \text{ is symplectic};\\
u^-(A),&\text{if } \varepsilon=-1\text{ and }\sigma \text{ is orthogonal,}\\
& \text{or, }\varepsilon=1\text{ and }\sigma \text{ is symplectic};\\
u^{0}(A/k),&\text{ if }\sigma\text{ is a unitary }K/k\text{-involution.}\\
\end{array}\right.\]
where $u^+$ is called the \textit{orthogonal} Hermitian u-invariant, $u^-$ is called the \textit{symplectic} Hermitian u-invariant and $u^0$ is called the \textit{unitary} Hermitian u-invariant.

In \cref{sec3}, we provide upper bounds for Hermitian u-invariants of division algebras with Springer's property over $\mathscr A_i(2)$-fields. 
{For definitions of $\mathscr A_i(2)$-fields and  Springer's property, see the beginning of \cref{sec3}.} 

\begin{thm}\label{thm1} 
Let $D$ be a division algebra over a field $K$ with an involution  $\sigma$.  
Suppose $k=K^{\sigma}$, $\Char k\ne 2$, $\varepsilon\in\{1,-1\}$ and $d=\deg(D)$. 
Suppose $k$ is an $\mathscr A_i(2)$-field and $D$ satisfies the Springer's property. 

\noindent
(i) If $\sigma$ is of the first kind, then  $u^+(D)\le(1+\frac{1}{d})2^{i-1}$ and $u^-(D)\le(1-\frac{1}{d})2^{i-1}$; 

\noindent
(ii) If $\sigma$ is of the second kind, then $u^0(D/k)\le 2^{i-1}$. 
\end{thm}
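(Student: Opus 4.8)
The plan is to convert anisotropy of $\varepsilon$-hermitian forms over $(D,\sigma)$ into anisotropy of ordinary quadratic forms over $k$, exploit that the $\mathscr A_i(2)$-hypothesis forces quadratic forms over $k$ of dimension exceeding $2^{i}$ to be isotropic, and then read off the three bounds from a dimension count. Throughout write $n=\Rank(h)$, and treat the two kinds of involution separately.

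I would first put $h$ in diagonal normal form, $h\cong\langle a_1,\dots,a_n\rangle$ with $a_j\in D^{\times}$ and $\sigma(a_j)=\varepsilon a_j$, so that the diagonal entries lie in the $\varepsilon$-symmetric subspace $\Sym^{\varepsilon}(\sigma)=\{x\in D:\sigma(x)=\varepsilon x\}$. For $\sigma$ of the first kind one has $K=k$ and $\dim_{k}\Sym^{\varepsilon}(\sigma)=\frac{d(d+1)}{2}$ in the $u^{+}$-situation (orthogonal $\sigma$ with $\varepsilon=1$, or symplectic $\sigma$ with $\varepsilon=-1$) and $\frac{d(d-1)}{2}$ in the $u^{-}$-situation; for $\sigma$ of the second kind $\dim_{k}\Sym^{\varepsilon}(\sigma)=d^{2}$, and here I may take $\varepsilon=1$ by Mahmoudi's result that $u^{0}$ is independent of $\varepsilon$ \cite[2.2]{Mah}, passing if necessary through the underlying quadratic extension $K/k$ and its conjugation involution. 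Now I invoke Springer's property of $D$: it attaches to $h$ a canonically associated quadratic form $q_{h}$ over $k$ (a trace-type form built from the diagonalization) with the property that $h$ is anisotropic over $(D,\sigma)$ if and only if $q_{h}$ is anisotropic over $k$, and such that $\dim_{k}q_{h}$ grows linearly in $n$ with slope governed by $\dim_{k}\Sym^{\varepsilon}(\sigma)$. It is exactly the passage from $\frac{d(d+1)}{2}$ to $\frac{d(d-1)}{2}$ — and to $d^{2}$ in the unitary case — that produces the respective factors $1+\frac1d$, $1-\frac1d$ and $1$. Since $k$ is an $\mathscr A_i(2)$-field, $q_{h}$ can be anisotropic only if $\dim_{k}q_{h}\le 2^{i}$; this caps $n$ by $(1+\frac1d)2^{i-1}$, by $(1-\frac1d)2^{i-1}$ and by $2^{i-1}$ respectively, and taking the supremum over all anisotropic $h$ yields the bounds of (i) and (ii).

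The main obstacle is the middle step. Making the hermitian-to-quadratic transfer genuinely \emph{detect} anisotropy, rather than reflect it in only one direction, is precisely the force of Springer's property and must be used with care, because the construction of $q_{h}$ typically runs through Morita equivalences and auxiliary scalar extensions — a generic splitting field of $D$, or a function field of a conic or quadric. One needs those auxiliary fields still to be $\mathscr A_i(2)$-fields, and I expect that stability, together with the exact accounting of dimensions and Witt indices through the transfer so that $1\pm\frac1d$ appears with the correct sign and without an off-by-one error, to be where the real work lies. In the second-kind case there is in addition the bookkeeping of the restriction of scalars from $\Herm^{\varepsilon}(D,\sigma)$ to quadratic forms over $k$ and of the reduction to $\varepsilon=1$.
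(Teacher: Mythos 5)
There is a genuine gap at the central step. You posit that Springer's property ``attaches to $h$ a canonically associated quadratic form $q_h$ over $k$'' with $h$ anisotropic if and only if $q_h$ is anisotropic. No such single form exists for a general division algebra with involution (that kind of equivalence is special to quaternion algebras with the canonical involution, via Jacobson's theorem), and it is not what Springer's property asserts: as defined in Section 3, Springer's property is only the statement that anisotropic $\varepsilon$-hermitian spaces remain anisotropic under odd-degree field extensions. You have also weakened the $\mathscr A_i(2)$ hypothesis to ``a single quadratic form over $k$ of dimension $>2^i$ is isotropic over $k$,'' whereas the definition concerns \emph{systems}: $r$ quadratic forms in more than $r2^i$ variables have a simultaneous nontrivial zero over some extension $L/k$ of odd degree (not over $k$ itself). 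Both of these losses are fatal to your argument as written.

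The actual mechanism is elementary and does not route through any transfer, Morita equivalence, or generic splitting field. Fixing a $k$-basis $e_1,\dots,e_r$ of $\Sym^{\varepsilon}(D,\sigma)$ with $r=d(d+\varepsilon)/2$ (resp.\ $r=d^2$ in the unitary case), one writes $h(x,x)=\sum_{j=1}^{r}q_j(x,x)e_j$ for $x\in D^n$, so that isotropy of $h$ is the simultaneous vanishing of the \emph{system} $q_1,\dots,q_r$ in $d^2n$ (resp.\ $2d^2n$) variables over $k$. If $n$ exceeds the claimed bound then $d^2n>r2^i$, the $\mathscr A_i(2)$ property yields a common zero over an odd-degree $L/k$, hence $h_L$ is isotropic, and Springer's property descends isotropy back to $k$. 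Note that your dimension heuristic is inverted relative to this count: a larger $\dim_k\Sym^{\varepsilon}(D,\sigma)$ means \emph{more} equations, hence more variables are needed before a zero is guaranteed, hence a \emph{larger} admissible anisotropic rank — which is why $u^+$ gets the factor $1+\frac1d$ and $u^-$ gets $1-\frac1d$; in your single-form picture a larger ``slope'' would cap $n$ at a smaller value, giving the inequalities the wrong way around.
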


Let $p$ be an odd prime number. 
Let $F$ be the function field of a smooth projective geometrically integral curve over a $p$-adic field. The field $F$ is also called a \textit{semi-global} field. 
Let $D$ be a central division $F$-algebra with an involution $\sigma$ of the first kind. 
Suppose $D\ne F$. 
As a consequence of an inequality of Mahmoudi \cite[3.6]{Mah} with $u(F)=8$ (\cite{PS10} or \cite{Heath-Brown} and \cite{L}), $u^+(D)\le 27$ and $u^-(D)\le 10$. 
Parihar and Suresh \cite{PS} have proved that $u^+(D)\le 14$ and $u^-(D)\le 8$. 
We obtain exact values of Hermitian u-invariants: 
%

\begin{thm}\label{u}
Let $F$ be the function field of a curve over a $p$-adic field with $p \ne 2$. 
Let $D$ be a central division algebra over $F$. Let $L/F$ be a quadratic extension. 

\noindent
(1) if $D$ is quaternion, then $u^+(D)=6$ and  $u^-(D)=2$; 

\noindent
(2) if $D$ is quaternion and $D\otimes_FL$ is division, then $u^0(D\otimes_F L/F) = 4$; 

\noindent
(3) if $D$ is biquaternion, then $u^+(D)=5$ and $u^-(D)=3$. 
\end{thm}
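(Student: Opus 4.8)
The plan is to obtain the upper bounds as immediate consequences of \cref{thm1} and to produce, for each lower bound, an anisotropic hermitian form of the asserted rank by descent from a well-chosen residue field of $F$.

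\emph{Upper bounds.} A semi-global field $F$ with $p\ne 2$ is an $\mathscr A_3(2)$-field (this reflects the equality $u(F)=8$ of \cite{PS10} together with the cohomological dimension of $F$; see \cref{sec3}), and the same holds for its finite extensions. The algebras occurring — a quaternion $D$, a biquaternion $D$, and the division algebra $D\otimes_F L$ — satisfy Springer's property (classically for quaternion algebras via Jacobson's correspondence and Springer's odd-degree theorem, and by the results of \cref{sec3} in general), and their division-ness persists under odd-degree extensions. Thus \cref{thm1} applies with $i=3$: for $\sigma$ of the first kind one gets $u^+(D)\le(1+\tfrac1d)2^{2}$ and $u^-(D)\le(1-\tfrac1d)2^{2}$, which yields $u^+(D)\le 6$, $u^-(D)\le 2$ when $d=2$ and $u^+(D)\le 5$, $u^-(D)\le 3$ when $d=4$; for the unitary involution on $D\otimes_F L$, whose fixed field in the centre $L$ is the $\mathscr A_3(2)$-field $F$, we get $u^0(D\otimes_F L)\le 2^{2}=4$. (The bound $u^-(D)\le 2$ is also elementary: a rank-$3$ hermitian form over $(D,\gamma)$, $\gamma$ the canonical symplectic involution, has norm form $n_D\otimes\langle a_1,a_2,a_3\rangle$, a $12$-dimensional quadratic form over $F$, hence isotropic since $12>u(F)=8$.)

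\emph{Lower bounds.} Here one passes through hermitian Morita theory: an anisotropic $\varepsilon$-hermitian form of rank $r$ over $(A,\sigma)$ exists precisely when $F$ (resp.\ $L$) carries an anisotropic quadratic form — or, in the biquaternion and unitary cases, a system of quadratic/hermitian data — of prescribed dimension and with prescribed discriminant, Clifford invariant and Brauer class. The cleanest instance is $u^-(D)\ge 2$: by Jacobson's equivalence a rank-$2$ anisotropic hermitian form over $(D,\gamma)$ for $D=(a,b)$ is the same as an anisotropic $3$-fold Pfister form $\langle\!\langle a,b,c\rangle\!\rangle$ over $F$, i.e.\ the assertion $\Nrd(D^\times)\ne F^\times$; and since a nontrivial class in $\Br(F)[2]$ ramifies along some prime divisor $C$ of a regular proper model $\mathscr X\to\Spec\mathcal O$ of the $p$-adic curve, any $f\in F^\times$ with odd valuation along $C$ lies outside $\Nrd(D^\times)$, so $\langle\!\langle a,b,f\rangle\!\rangle$ is anisotropic. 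The higher-rank lower bounds follow the same scheme: fix a regular proper model of the curve, blow up so that the ramification of $D$ (and of an auxiliary form) is in standard form along a chosen prime divisor $C$, arrange that $D$ has good, still-division reduction at the generic point of $C$, and lift to $F$ an anisotropic quadratic form of the needed type, obtained over the complete discretely valued field $F_C$ by Springer's theorem from anisotropic forms of dimension $\le 4$ with prescribed residual invariants over the residue field $\kappa(C)$ — the function field of a curve over a finite field, or a finite extension of a $p$-adic field. Local–global principles for isotropy over semi-global fields (Parimala–Suresh, via the patching methods of Harbater, Hartmann and Krashen) ensure that the forms so built remain anisotropic over $F$.

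\emph{Main obstacle.} The upper bounds are automatic; the whole difficulty sits in the lower bounds, and it has two parts. First, the Morita dictionary must be made explicit in each case: the quaternion bounds rest on Jacobson's theorem, but for biquaternion $D$ and for unitary involutions there is no such direct correspondence, and one must instead control the anisotropic kernel through its invariants — via the Albert form of $D$ and a $2$-excellence/descent argument in the biquaternion case, and via the transfer to $F$ and the associated $F$-quadratic form in the unitary case. Second, and this is the real obstacle, one must genuinely construct anisotropic quadratic forms over $F$ of dimension as large as $u(F)=8$ with prescribed discriminant and Clifford invariant and verify that they stay anisotropic over $F$; this is where the geometry of the $p$-adic curve, a careful ramification analysis on a well-chosen model, and the local–global principles enter, the point being that one works exactly at the threshold $u(F)=8$, leaving no slack.
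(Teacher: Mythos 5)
Your upper-bound argument is exactly the paper's: $F$ is an $\mathscr A_3(2)$-field, Springer's property holds (\cite[3.5]{PSS} for quaternions, \cite[1.5]{Wu} in general), and \cref{thm1} gives $6,2,5,3,4$. That half is fine.

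The lower bounds are where your proposal has a genuine gap: you describe a strategy (Morita/Jacobson dictionaries, models of the curve, blow-ups, ramification in standard form, lifting forms with prescribed discriminant and Clifford invariant) but you never carry it out, and you yourself flag this as ``the real obstacle.'' Worse, the dictionary you would need does not exist in the hard cases: a skew-hermitian form of rank $n$ over a quaternion algebra with its canonical involution corresponds to a \emph{system} of three quadratic forms in $4n$ variables (or to a single form over the function field of the conic), not to one $F$-quadratic form with prescribed invariants, and the biquaternion and unitary cases are still further from any such correspondence. Two smaller points: the claim that \emph{any} $f$ of odd valuation along a ramification curve $C$ of $D$ fails to be a reduced norm is false (after a residue computation the anisotropy of $\langle\!\langle a,b,f\rangle\!\rangle$ over $F_C$ depends on the residue of the unit part of $f$, and over a local residue field reduced norms are surjective); and the appeal to local--global principles is backwards --- anisotropy over a single completion $F_v$ already implies anisotropy over $F$, so no patching is needed.

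The missing idea, which is how the paper actually proves the lower bounds, is to work over a completion rather than over $F$: by \cite[2.6]{RS} there is a divisorial discrete valuation $v$ of $F$ such that $D\otimes F_v$ (resp.\ $D\otimes L\otimes F_v$) is still division, and the residue field of $v$ is a $p$-adic field or a global function field. Over the complete discretely valued field $F_v$ one computes the hermitian $u$-invariants \emph{exactly} (\cref{key2}) via Larmour's hermitian Springer theorem (\cref{Lamour2}, \cref{Lamour3}, \cref{Lamour4}), feeding in the known values over the residue field (\cref{local} for local fields, \cref{global} for global function fields), with the ramified cases handled by the choice of parameter in \cref{choice_of_pid}. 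Then \cref{dvr} transfers an anisotropic form of maximal rank from $(D\otimes F_v,\widehat\sigma)$ down to $(D,\sigma)$, giving $u(D,\sigma,\varepsilon)\ge u(D\otimes F_v,\widehat\sigma,\varepsilon)$. This route requires no construction of quadratic forms with prescribed invariants over $F$ and no local--global principle; without it (or a completed version of your construction) the equalities in the theorem are not established.
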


Let $A$ be a central simple algebra over a field $k$.
Suppose $\Char k\ne 2$ and $\Per(A)=2$.  
Then, by a special case \cite{Mer81} of the Merkurjev-Suslin theorem \cite{MS}, $A$ is Brauer equivalent to $H_1 \otimes \cdots \otimes H_n$
for some quaternion algebras  $H_1, \cdots H_n$ over $k$. 
Let $K/k$ be a quadratic extension. 
In \cite{PS}, upper bounds for $u^+(A)$, $u^-(A)$, $u^0(A \otimes_{ k}K/k)$ are given and they depend only on $u(k)$ and $n$. 
In \cref{app2}, we obtain sharper upper bounds for these Hermitian u-invariants. In fact we prove the following  

\begin{thm}\label{thm2}
Let $A$ be a central simple algebra over a field $k$.
Suppose $\Char k\ne 2$ and $\Per(A)=2$. 
Suppose $A$ is Brauer equivalent to $H_1 \otimes \cdots \otimes H_n$
for $n$ quaternion algebras $H_1, \cdots H_n$ over $k$. 

\noindent
(1) $u^+(A)\le (\frac{4}{5}+\frac{1}{5}(\frac{9}{4})^{n})u(k)$;

\noindent
(2) $u^-(A)\le (-\frac{1}{5}+\frac{1}{5}(\frac{9}{4})^{n})u(k)$;

\noindent
(3) $u^0(A\otimes_k K/k)\le (\frac{1}{5}+\frac{3}{10}(\frac{9}{4})^{n})u(k)$ 
for all quadratic extension $K/k$. 
\end{thm}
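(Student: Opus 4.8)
The plan is to argue by induction on $n$, the key being a reduction that splits off one quaternion factor. First I would reduce to the underlying division algebra $D$ of $A$: by \cite[2.2]{Mah} each of $u^+,u^-,u^0$ is a Morita invariant, so $u^\bullet(A)=u^\bullet(D)$, and $D$ is Brauer equivalent to a product of at most $n$ quaternion algebras; since the three functions of $n$ on the right-hand sides of (1)--(3) are positive and strictly increasing in $n$, it suffices to treat the case of $n$ factors. For the base case $n=0$ the algebra is split, and then $u^+(A)=u(k)$ and $u^-(A)=0$, while for a quadratic extension $K=k(\sqrt d)$ an anisotropic hermitian form of rank $r$ over $K/k$ diagonalizes over $k$ as $\langle c_1,\dots,c_r\rangle$ with $c_i\in k^\times$, and $\langle c_1,\dots,c_r\rangle\otimes\langle 1,-d\rangle$ is then an anisotropic quadratic form of dimension $2r$ over $k$, so $u^0(A\otimes_kK)\le\tfrac12 u(k)$; these three values are exactly the right-hand sides at $n=0$.

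For the inductive step, assume the bounds for $n-1$ and write $A$ Brauer equivalent to $H_1\otimes\cdots\otimes H_n$ with $H_1=(a,b)_k$, carrying an involution $\sigma$ (of the first kind for $u^\pm$; the unitary case is handled in the same way after adjoining $\sqrt a$ to the fixed field $k$). Over $L=k(\sqrt a)$ the algebra $H_1$ splits, so $A_L$ is Brauer equivalent to a product of $n-1$ quaternion algebras over $L$ and $\sigma_L$ has the same kind and type. The engine is a Mahmoudi-type exact sequence of Witt groups of $\varepsilon$-hermitian forms
\[
W(A_0,\sigma_0)\;\longrightarrow\;W(A,\sigma)\;\xrightarrow{\ \mathrm{res}_{L/k}\ }\;W(A_L,\sigma_L),
\]
in which $\ker(\mathrm{res}_{L/k})$ is identified with forms induced from a quaternion-reduced datum $(A_0,\sigma_0)$ of strictly smaller degree attached to $(H_1,L)$. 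Concretely, an anisotropic $\varepsilon$-hermitian form $h$ over $(A,\sigma)$ decomposes as $h\cong h_0\perp h_1$ with $(h_0)_L$ anisotropic and $(h_1)_L$ hyperbolic; then $\Rank(h_0)\le u^\bullet(A_L)$, bounded by the induction hypothesis (the $(n-1)$-term evaluated at $u(L)$, together with $u(L)\le 2u(k)$), while $h_1$ lies in the kernel and, via Jacobson's classification of hermitian forms over a quaternion algebra combined with the type interchanges of \cite[2.2]{Mah}, is controlled by the $(n-1)$-invariants of $A_0$ over $k$ and by $u(k)$. Assembling these estimates while tracking which of $u^+,u^-,u^0$ occurs at each point yields linear recursions of the shape $u^\bullet(A)\le\tfrac94\,u^\bullet(A')+c_\bullet u(k)$ with $c_+=-1$, $c_-=\tfrac14$, $c_0=-\tfrac14$, where $A'$ is the relevant $(n-1)$-factor algebra.

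Finally I would substitute the induction hypothesis into these recursions: one checks directly that $\tfrac45+\tfrac15(\tfrac94)^n$, $-\tfrac15+\tfrac15(\tfrac94)^n$ and $\tfrac15+\tfrac{3}{10}(\tfrac94)^n$ satisfy the recursions with constants $c_+,c_-,c_0$ and match the base values at $n=0$, which gives (1)--(3). The main obstacle is the inductive step: isolating the two contributions of the exact sequence tightly enough that appending one quaternion factor multiplies the dominant term by exactly $\tfrac94=(\tfrac32)^2$ rather than a larger constant — this is precisely where the improvement over \cite{PS} comes from — and correctly bookkeeping the passage among $u^+$, $u^-$ and $u^0$ forced at each step by the Morita equivalences of \cite[2.2]{Mah}. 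A secondary point is to handle the cases where $A_L$ or an intermediate algebra fails to remain division, where one replaces it by its underlying division algebra and uses monotonicity of the bounds in the number of quaternion factors.
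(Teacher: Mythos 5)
There is a genuine gap in the inductive step, and the specific mechanism you propose cannot produce the stated constants. Your plan bounds the part $h_0$ of $h$ that stays anisotropic over the splitting field $L=k(\sqrt a)$ of $H_1$ by the induction hypothesis \emph{over $L$}, i.e. by (coefficient at $n-1$)$\cdot u(L)$ together with $u(L)\le 2u(k)$. This already overshoots the target before the kernel term is even added: for $n=2$ one gets $\Rank(h_0)\le 2a_1u(k)=\tfrac{40}{16}u(k)$, while the claimed bound is $a_2u(k)=\tfrac{29}{16}u(k)$ (and even the sharper estimate $u(L)\le\tfrac32 u(k)$ for quadratic extensions gives $\tfrac{30}{16}u(k)$, still too large); the same failure occurs for $u^0$, where $2c_1=\tfrac{56}{32}>c_2=\tfrac{55}{32}$. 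More generally $2a_{n-1}>\tfrac94 a_{n-1}-1=a_n$ whenever $a_{n-1}<4$, so there is no room left for the kernel contribution for small $n$, and for large $n$ the kernel of restriction is itself of order $(\tfrac94)^{n-1}u(k)$, not the $O(u(k))$ slack your recursion would require. The recursions $a_n=\tfrac94a_{n-1}-1$, $b_n=\tfrac94b_{n-1}+\tfrac14$, $c_n=\tfrac94c_{n-1}-\tfrac14$ are indeed satisfied by the closed formulas, but verifying that the formulas satisfy some recursion is not the same as deriving that recursion from the geometry; the derivation is exactly the part you defer, and it fails along the route you chose.

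The paper's proof never passes to the u-invariant over a larger base field. It takes $\lambda,\mu$ in the last quaternion factor, lets $\tilde A$ be the centralizer of $k(\lambda)$ in $A=H_1\otimes\cdots\otimes H_{n+1}$, so $\tilde A=H_1\otimes\cdots\otimes H_n\otimes k(\lambda)$ carries a \emph{unitary} involution $\sigma_1=\sigma|_{\tilde A}$ with fixed field $k$ and a second involution $\sigma_2=\Int(\mu^{-1})\circ\sigma_1$ of the same type as $\sigma$. Mahmoudi's inequalities give $u(A,\sigma,\varepsilon)\le\min\{u(\tilde A,\sigma_1,\varepsilon)+\tfrac12u(\tilde A,\sigma_2,-\varepsilon),\ \tfrac12u(\tilde A,\sigma_1,\varepsilon)+u(\tilde A,\sigma_2,-\varepsilon)\}$; the first summand is the unitary invariant $c_nu(k)$ (measured over $k$, with no factor of $2$), and the second is bounded by $\tfrac32u(H_1\otimes\cdots\otimes H_n,\tau_1\otimes\cdots\otimes\tau_n,\mp\varepsilon)$ over $k$ via Parihar--Suresh. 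This yields the coupled system $a_{n+1}=\tfrac34a_n+c_n$, $b_{n+1}=\tfrac32b_n+\tfrac12c_n$, and then $c_{n+1}=\tfrac12a_{n+1}+b_{n+1}$ is deduced \emph{from} the bounds for $a_{n+1}$ and $b_{n+1}$, not by a separate induction on $u^0$ — the three invariants are genuinely intertwined at each step, which your sketch of the unitary case does not capture. If you want to salvage your write-up, replace the restriction-to-a-splitting-field engine by this centralizer argument (or prove a transfer statement sharp enough that the dominant term is multiplied by at most $\tfrac94$ with the exact lower-order constants, which is precisely the hard content).
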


\noindent
\textbf{Acknowledgements}. 

The author acknowledges partial support from the NSF-DMS grant 1201542 and the NSF-FRG grant 1463882. 
The author thanks Professor V.~Suresh for his thorough detailed instructions and Professor R.~Parimala for helpful discussions. 
The author thanks Prof.~Wadsworth for his helpful comments. 

\section{Preliminaries}\label{pre}
Let $K$ be a field. 
Let $A$ be a central simple algebra over $K$ with an involution $\sigma$. 
Let $k=K^{\sigma}$. 
{We suppose $\Char(k)\ne 2$ throughout the paper.}
Let $V$ be a finitely generated right $A$-module and $\varepsilon\in \{ 1, -1 \}$. 
A map $h: V\times V\to A$ is called \textit{an $\varepsilon$-hermitian form} over $(A, \sigma)$ if $h$ is bi-additive; $h(xa, yb)=\sigma(a)h(x,y)b$ for all $a,b\in A$, $x,y\in V$; and $h(y,x)=\varepsilon\sigma(h(x,y))$ for all $x,y\in A$. 
We call $h$ \textit{an $\varepsilon$-hermitian space} if given $h(x,y)=0$ for all $x\in V$, we have $y=0$. 
We say that $h$ is \textit{isotropic} if there exists $x\in V$, $x\ne 0$ such that $h(x,x)=0$; otherwise we say that $h$ is \textit{anisotropic}. 
 
\begin{lem}[Morita Invariance]\label{Morita}
{Let $K$, $A$, $\sigma$, $k$ be as before.}
Suppose $A \simeq M_m(D)$ for a central division algebra $D$ over $K$.
Suppose $\sigma$ is an involution on $A$ and $\varepsilon\in\{1,-1\}$. 
Then there exists an involution $\tau$ on $D$ and $\varepsilon_0\in\{1,-1\}$ such that 
$u(A,\sigma,\varepsilon)=u(D,\tau,\varepsilon\varepsilon_0)$. 

Furthermore, $u^+(A)=u^+(D)$,  $u^-(A)=u^-(D)$ and $u^0(A/k)=u^0(D/k)$. 
\end{lem}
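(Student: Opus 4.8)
The plan is to invoke Morita theory for hermitian forms over a central simple algebra with involution. The statement is essentially classical (it appears in Knus's book and in the work of Mahmoudi), so the proof is a matter of assembling the right ingredients. First I would fix the Morita equivalence between the category of right $A$-modules and the category of right $D$-modules coming from the isomorphism $A\simeq M_m(D)=\End_D(D^m)$; concretely, this is the functor $V\mapsto V\otimes_A D^m$ (or $\Hom_A$ into a progenerator), which is an equivalence of categories carrying finitely generated modules to finitely generated modules. The point is that this equivalence is compatible with the duality induced by $\sigma$ up to a choice of data: following Knus--Merkurjev--Rost--Tignol (or \cite{Mah}), the involution $\sigma$ on $A=M_m(D)$ is adjoint to some $\varepsilon_0$-hermitian form $h_0$ over $(D,\tau)$ for a suitable involution $\tau$ on $D$ of the same kind as $\sigma$, and conjugating by $h_0$ transports $\varepsilon$-hermitian spaces over $(A,\sigma)$ to $\varepsilon\varepsilon_0$-hermitian spaces over $(D,\tau)$.

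Second, I would check that this transport is an equivalence of the categories $\Herm^{\varepsilon}(A,\sigma)\to\Herm^{\varepsilon\varepsilon_0}(D,\tau)$ that preserves isotropy and anisotropy: a nonzero isotropic vector corresponds to a nonzero isotropic vector under the equivalence (this follows because the equivalence is additive and an isotropic subspace is exactly a submodule on which the form vanishes). Third, I would verify that rank is preserved: if $h\in\Herm^{\varepsilon}(A,\sigma)$ lives on a right $A$-module $V$ and corresponds to $h'$ on a right $D$-module $V'$, then $\dim_K V=\dim_K V'\cdot\deg(A)\Ind(A)/(\deg(D)\Ind(D))$ after unwinding $\deg(A)=m\deg(D)$ and $\Ind(A)=\Ind(D)$, so that $\Rank(h)=\Rank(h')$. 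Combining these three facts gives a rank-preserving bijection between anisotropic spaces on the two sides, hence $u(A,\sigma,\varepsilon)=u(D,\tau,\varepsilon\varepsilon_0)$.

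For the "furthermore" clause: the type of $\tau$ equals the type of $\sigma$ (Morita equivalence preserves the kind of the involution, since it is detected on the center), and one reads off $\varepsilon_0$ case by case. If $\sigma$ is orthogonal then $h_0$ is symmetric bilinear so $\varepsilon_0=1$ and $u^+(A)=u(A,\sigma,1)=u(D,\tau,1)=u^+(D)$ and similarly $u^-(A)=u^-(D)$; if $\sigma$ is symplectic then $h_0$ is alternating so $\varepsilon_0=-1$, and now $u^+(A)=u(A,\sigma,-1)=u(D,\tau,1)=u^+(D)$ (here using Mahmoudi's identification $u(D,\tau,1)=u^+(D)$ when $\tau$ is orthogonal, which is forced) and likewise $u^-$; if $\sigma$ is unitary then $u^0(A)=u(A,\sigma,1)=u(D,\tau,\varepsilon_0)=u^0(D)$ since for unitary involutions $u^0$ does not depend on the sign.

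The main obstacle is bookkeeping rather than mathematics: one must pin down the involution $\tau$ and the sign $\varepsilon_0$ produced by the Morita equivalence and confirm that the composite "type of $\sigma$" $\mapsto$ "type of $\tau$, sign $\varepsilon_0$" matches the three-case dictionary of \cite[2.3]{Mah} used to define $u^{\pm}$ and $u^0$. I would simply cite Knus--Merkurjev--Rost--Tignol (or \cite{Mah}) for the existence of the Morita equivalence with these compatibilities and spend the written proof verifying the rank formula and the preservation of anisotropy, which are the only genuinely computational points.
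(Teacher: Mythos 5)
Your proposal is correct and follows essentially the same route as the paper, whose entire proof is a citation of the hermitian Morita equivalence in Knus \cite[ch.~I, 9.3.5]{Knus} together with \cite[4.2]{inv}; you simply unwind the same ingredients (adjoint involution, transport of forms, preservation of isotropy and rank). The only loose point is the claim that $\sigma$ orthogonal forces $\varepsilon_0=1$ — in general one may have $\tau$ symplectic and $\varepsilon_0=-1$ — but since the pair (type of $\tau$, sign $\varepsilon\varepsilon_0$) always lands in the same box of Mahmoudi's dictionary, the conclusion is unaffected.
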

\begin{proof}
It is a consequence of \cite[ch.~I, 9.3.5]{Knus} and \cite[4.2]{inv}. 
\end{proof}

From now on, we mostly focus on central division algebras. 

\begin{lem}\label{local}
Let $D$ be a central division algebra over a field $K$ with an involution $\sigma$. 
Let $k=K^{\sigma}$, $\Char k\ne 2$. 
Suppose $k$ is a non-archimedean non-dyadic local field.

\noindent
(1) If $\sigma$ is of the first kind and $D\ne k$, then $u^+(D)=3$, $u^-(D)=1$. 

\noindent
(2) If $\sigma$ is of the second kind, then $u^0(D/k)=2$. 
\end{lem}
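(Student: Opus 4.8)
The plan is to reduce the statement to classical computations of $u$-invariants of quadratic and hermitian forms over local fields, using Morita invariance (\Cref{Morita}) and the structure theory of involutions. First consider case (1), where $\sigma$ is of the first kind on a central division $K$-algebra $D$ with $K^\sigma = k$ a non-archimedean local field; since $\sigma$ is of the first kind, $K = k$. Over a local field the only nontrivial central division algebra of the relevant type is the quaternion division algebra $Q$, so $D \simeq Q$ and $\deg D = 2$. The idea is then to invoke the well-known classification of hermitian/skew-hermitian forms over $(Q,\sigma)$: for the canonical (symplectic) involution $\gamma$, skew-hermitian forms correspond via Morita theory to quadratic forms over $k$ (so $u^-(Q) = u^+(Q,\gamma,-1)$ matches the Witt-group description), while hermitian forms over $(Q,\gamma)$ are classified by their rank and discriminant-type data. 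Concretely, a skew-hermitian form over $(Q,\gamma)$ is anisotropic iff its ``associated'' binary-type datum is, and over a local field this forces rank $\le 1$, giving $u^-(Q) \le 1$; anisotropy of the norm form $\langle 1 \rangle$ gives $u^-(Q) = 1$. For $u^+(Q)$ one uses the correspondence between hermitian forms over $(Q,\bar{\ })$ (quaternion conjugation, which is symplectic) and the relevant Witt group; the bound $u^+(Q) \le 3$ follows from $u(k) = 2$ together with the fact that a $4$-dimensional quadratic form containing the norm form of $Q$ as a subform is isotropic, and sharpness is exhibited by an explicit rank-$3$ anisotropic hermitian form (e.g.\ built from $\langle 1, -a, -b\rangle$-type data when $Q = (a,b)_k$).

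For case (2), $\sigma$ is of the second kind, so $K/k$ is a quadratic extension with $k$ local, hence $K$ is also local. Here the plan is: by \Cref{Morita} we may assume $D$ is division with a unitary involution; but a central division $K$-algebra admitting a $k/K$-unitary involution must have period dividing $2$ in $\Br(K)$ restricted appropriately, and over a local field this again forces $D \in \{K, Q_K\}$ where $Q_K$ is the quaternion division algebra over $K$. The statement $u^0(D) = 2$ should then follow from the classification of hermitian forms over $(K, -)$ and $(Q_K, -)$: over the local field $K$ with its nontrivial automorphism over $k$, hermitian forms are classified by rank and discriminant in $k^\times / N_{K/k}(K^\times)$, which has order $2$; an anisotropic hermitian form over $(K,-)$ can have rank at most $2$ (a rank-$3$ form represents $0$ since the discriminant condition cannot obstruct three independent directions), and rank $2$ is attained. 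One must also check the quaternionic case $D = Q_K$ gives the same value, which is where one uses that a unitary involution on $Q_K$ and the transfer to hermitian forms over $K$ behave compatibly (or simply note $u^0(Q_K) \le u^0(K)$ type inequalities plus an explicit anisotropic rank-$2$ form).

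The key inputs are: the classification of central simple algebras over a non-archimedean local field (every division algebra is cyclic, determined by its Hasse invariant, and those of period $2$ are quaternion), the computation $u(k) = u(K) = 2$ (every quadratic form in $\ge 3$ variables over a local field is isotropic), and the Morita-theoretic dictionary of \cite{Knus} and \cite{inv} translating $\varepsilon$-hermitian forms over $(D,\sigma)$ into forms over a division algebra with a standard involution. The main obstacle I expect is keeping track of the exact type-switching under Morita equivalence — i.e.\ verifying that after reducing to the division algebra the relevant invariant is the one claimed ($u^+$ vs.\ $u^-$ vs.\ $u^0$) and that the passage does not change the value — together with exhibiting the explicit anisotropic forms of maximal rank to establish that the upper bounds are attained. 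Once the algebra is pinned down to be quaternion or the field itself, the numerical claims ($3$, $1$, $2$) are then essentially the standard tables of hermitian $u$-invariants over local fields and can be cited or checked directly.
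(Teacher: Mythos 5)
Your overall strategy---pin $D$ down to the quaternion division algebra in case (1), reduce case (2) to hermitian forms over the quadratic extension, and then quote the classical local computations---is the same as the paper's, whose entire proof is a citation of Tsukamoto \cite[Thm.~1, Thm.~3]{T} and Scharlau \cite[ch.~10, 2.2]{Sch}. However, your write-up rests on two concrete errors. First, $u(k)=4$ for a non-archimedean local field, not $2$: the norm form $n_Q$ of the quaternion division algebra is an anisotropic $4$-dimensional quadratic form, and it is forms in $\ge 5$ variables, not $\ge 3$, that are always isotropic. This false premise is what you use to derive both the bound for $u^+(Q)$ and the rank bound in part (2). With the correct value the arguments read: hermitian forms over $(Q,\gamma)$ ($\gamma$ the canonical involution) transfer to quadratic forms $q\otimes n_Q$ of dimension $4n$, so anisotropy forces $n\le 1$; and hermitian forms for $K/k$ transfer to $2n$-dimensional quadratic forms over $k$, classified by rank and discriminant in $k^\times/N_{K/k}(K^\times)$, giving $u^0(K)=2$. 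Second, your $\varepsilon$-bookkeeping is reversed: since $\gamma$ is symplectic, the paper's conventions give $u^-(Q)=u(Q,\gamma,+1)$, computed by the \emph{hermitian} forms (value $1$), and $u^+(Q)=u(Q,\gamma,-1)$, computed by the \emph{skew-hermitian} forms (value $3$); you attribute each to the other.

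More substantively, the equality $u^+(Q)=3$ is not a formal consequence of $u(k)$ plus Morita theory, and your sketch does not supply a valid argument for the upper bound. For a skew-hermitian form of rank $n$ over $(Q,\gamma)$ the values $h(x,x)$ lie in the $3$-dimensional space of pure quaternions, so one is led to a system of three quadratic forms in $4n$ variables (this is exactly how \cref{thm1}(i) with $i=2$, $d=2$ yields $u^+(Q)\le 3$), or else one must invoke Tsukamoto's classification outright, as the paper does; a naive dimension count against $u(k)$ gives nothing. Sharpness does follow from the anisotropy of $\langle i,j,ij\rangle$ for $Q=(a,b)_k$, as you indicate. Finally, in part (2) the quaternionic subcase you worry about does not occur: a central division $K$-algebra with a $K/k$-unitary involution has split corestriction to $k$ by Albert's theorem, and the corestriction map on Brauer groups of local fields is injective, so $D=K$.
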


\begin{proof}
See \cite[Thm.~1, Thm.~3]{T} and \cite[ch.~10, 2.2]{Sch}.
\end{proof}

We fix the following {notations from 2.3 to 2.9}. 
Let $(k,v)$ be a complete discrete valued field
with residue field $\overline k$, $\Char \overline{k}\ne 2$. 
Let $D$ be a finite-dimensional division $k$-algebra with center $K$ with an involution $\sigma$ such that $K^{\sigma}=k$. 
By \cite[ch.~II, 10.1]{CF}, $v$ extends to a valuation $v'$ on $K$ such that $v'(x)=\frac{1}{[K:k]}{v}(N_{K/k}(x))$ for all $x\in K^*$.
By \cite{W1}, $v'$ extends to a valuation $w$ on $D$ such that $w(x)=\frac{1}{\Ind(D)}{v'}(\Nrd_{D/K}(x))$ for all $x\in D^*$.
Since $\Nrd_{D/K}(x)=\Nrd_{D/K}(\sigma(x))$, we have $w(\sigma(x))=w(x)$ for all $x\in D$. 
Let $R_w=\{x\in D~|~w(x)\ge 0\}$ and $\mathfrak m_w=\{x\in D~|~w(x)> 0\}$. 
Let $\overline D=R_w/\mathfrak m_w$ be the residue division algebra (see \cite[13.2]{Rei}) of $(D, w)$ over $\overline k$ with involution $\overline{\sigma}$ such that $\overline{\sigma}(\overline x)=\overline{\sigma(x)}$ for all $x\in R_w$, where $\overline{x}=x+\mathfrak m_w$. 
Let $h\colon V\times V\to D$ be an $\varepsilon$-hermitian space  over $(D,\sigma)$. 
{By \cite[Ch.~I, 6.2]{Knus}, $V$ is free with an orthogonal basis $\{e_1,\ldots, e_n\}$ such that $h(e_i, e_i)=a_i$ for some $a_i\in D$ with $\sigma(a_i)=\varepsilon a_i$ for all $1\le i\le n$;  and $h(e_i, e_j)=0$ for all $1\le i\le n$, $1\le j\le n$ and $i\ne j$. 
We denote $h=\langle a_1,\cdots,a_n\rangle$. }
If $w(a_i)=0$ for all $1\le i\le n$, then $\overline{h}=\langle \overline{a}_1,\cdots,\overline{a}_n\rangle\in\Herm^{\varepsilon}(\overline{D}, \overline{\sigma})$. 
Let $t_D$ be a parameter of $(D, w)$. 
By \cite[2.7]{L2}, there exists $\pi_D\in D$ such that 
$w(\pi_D)\equiv w(t_D)\mod{2w(D^*)}$ and $\sigma(\pi_D)=\varepsilon'\pi_D$ for some  $\varepsilon'\in\{1,-1\}$. 
Larmour proved the following hermitian analogue of a theorem of Springer. 

\begin{prop}[{\cite[3.4]{L1} or \cite[3.27]{L2}}]\label{Lamour}
{Let $k$, $v$, $D$, $K$, $\sigma$, $w$, $h$, $\pi_D$ and $\varepsilon'$ be as above.} There exist $h_1\in\Herm^{\varepsilon}(D, \sigma)$, $h_2\in \Herm^{\varepsilon\varepsilon'}(D, \Int(\pi_D)\circ\sigma)$, with $h\simeq h_1\perp h_2\pi_D$ and each diagonal entries of $h_1$ and $h_2$ have $w$-value $0$. 
Further, the following are equivalent: 
(i) $h$ is isotropic; 
(ii) $h_1$ or $h_2$ is isotropic; 
(iii) $\overline h_1$ or $\overline h_2$ is isotropic. 
\end{prop}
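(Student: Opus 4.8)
\emph{Proof idea.} The plan is to imitate Springer's theorem for quadratic forms over a complete discretely valued field, now for $\varepsilon$-hermitian forms over the complete valued division algebra $(D,w)$; the residue characteristic being $\ne 2$ is what makes the reduction modulo $\mathfrak m_w$ work. First I would diagonalize $h=\langle a_1,\dots,a_n\rangle$ by \cite[Ch.~I, 6.2]{Knus}. Replacing a diagonal entry $a$ by $\sigma(\lambda)\,a\,\lambda$ with $\lambda\in D^{*}$ changes its $w$-value by $2w(\lambda)\in 2w(D^{*})$ and preserves $\sigma(a)=\varepsilon a$, so the image of $w(a)$ in the order-two group $w(D^{*})/2w(D^{*})$ is an invariant of $\langle a\rangle$, and $w(t_D)$ represents the nontrivial class. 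After such rescalings I may assume every $a_i$ has $w$-value $0$ or $w(t_D)$; and, applying the same kind of rescaling to $\pi_D$ itself (which changes neither $w(\pi_D)\bmod 2w(D^{*})$ nor the relation $\sigma(\pi_D)=\varepsilon'\pi_D$), I may also assume $w(\pi_D)=w(t_D)$. Now collect the $w$-unit entries into $h_1=\langle u_1,\dots,u_r\rangle$ over $(D,\sigma)$ and write each remaining entry as $c_j\pi_D$ with $w(c_j)=0$. Using $\sigma(\pi_D)=\varepsilon'\pi_D$ one checks that $\tau:=\Int(\pi_D)\circ\sigma$ is again an involution (this is where $\varepsilon'=\pm1$ enters), that it preserves $R_w$ and $\mathfrak m_w$ so that $\overline\tau$ makes sense, and that $c_j\pi_D$ is $\varepsilon$-symmetric for $\sigma$ iff $c_j$ is $\varepsilon\varepsilon'$-symmetric for $\tau$. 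Setting $h_2=\langle c_1,\dots,c_s\rangle$ over $(D,\tau)$ and recalling that the ``multiplication by $\pi_D$'' functor $\Herm^{\varepsilon\varepsilon'}(D,\tau)\to\Herm^{\varepsilon}(D,\sigma)$ carries $h_2$ to the diagonal form $\langle c_1\pi_D,\dots,c_s\pi_D\rangle$ over $(D,\sigma)$ on the $D$-module whose action is twisted by $\Int(\pi_D)$, I obtain $h\simeq h_1\perp h_2\pi_D$ with all the stated properties.

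The implication (ii)$\Rightarrow$(i) is trivial: an isotropic vector of $h_1$, or of $h_2$ (hence of $h_2\pi_D$, since $(h_2\pi_D)(x,x)=h_2(x,x)\pi_D$), gives an isotropic vector of $h\simeq h_1\perp h_2\pi_D$. For (iii)$\Rightarrow$(ii) I would lift: since $D$ is $w$-complete, $\Char\overline k\ne 2$, and the diagonal entries of $h_1$ and $h_2$ are $w$-units, an isotropic vector of $\overline{h}_1$ or of $\overline{h}_2$ can be refined to an isotropic vector of $h_1$ or of $h_2$ by a hermitian Hensel/Newton argument (standard; cf.\ \cite{Knus}).

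The main implication is (i)$\Rightarrow$(iii). Given $h(v,v)=0$ with $v\ne 0$, write $v=(x_1,\dots,x_r,y_1,\dots,y_s)$ in the diagonalization above and rescale $v$ on the right by a power of $t_D$ so that all components lie in $R_w$ and at least one is a $w$-unit. Put $\alpha=\sum_i\sigma(x_i)u_ix_i=h_1(v_1,v_1)$ and $\beta=\sum_j\sigma(y_j)c_j\pi_Dy_j=(h_2\pi_D)(v_2,v_2)$, so $\alpha=-\beta$. Every term of $\beta$ has $w$-value $2w(y_j)+w(\pi_D)\ge w(\pi_D)>0$, so $w(\alpha)=w(\beta)>0$ and hence $\overline{h}_1(\overline v_1,\overline v_1)=\overline\alpha=0$ in $\overline D$. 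If $\overline v_1\ne 0$, then $\overline{h}_1$ is isotropic and (iii) holds. Otherwise every $x_i\in\mathfrak m_w$, so $w(x_i)\ge w(t_D)$ and therefore $w(\alpha)\ge 2w(t_D)$; also $\overline v_2\ne 0$ by primitivity. Writing $\beta=h_2(v_2',v_2')\pi_D$, where $v_2'$ is obtained from $v_2$ by applying $\Int(\pi_D)$ componentwise (which preserves membership in $R_w$, in $\mathfrak m_w$, and nonvanishing of the residue), and using $w(\pi_D)=w(t_D)$, I get $w\big(h_2(v_2',v_2')\big)=w(\alpha)-w(t_D)\ge w(t_D)>0$, so $\overline{h}_2(\overline v_2',\overline v_2')=0$ with $\overline v_2'\ne 0$, i.e.\ $\overline{h}_2$ is isotropic.

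I expect the main obstacle to be not any single deep step but the bookkeeping: correctly tracking, across the equivalence $\Herm^{\varepsilon\varepsilon'}(D,\tau)\simeq\Herm^{\varepsilon}(D,\sigma)$, which involution ($\sigma$, $\tau$, $\overline\sigma$, $\overline\tau$) and which sign each intermediate form lives over; verifying that $\tau$ and $\overline\tau$ are genuine involutions; and checking that ``$w$-primitive'' and ``nonzero residue'' transport correctly under the $\Int(\pi_D)$-twist. One must also be careful that the normalization $w(\pi_D)=w(t_D)$ is legitimate — it is, by the rescaling remark in the first paragraph — because the valuation count in the last step uses the inequality $w(\pi_D)<2w(t_D)$ in an essential way.
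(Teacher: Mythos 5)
Your argument is correct, and it is essentially a self-contained reconstruction of the proof the paper delegates entirely to Larmour (\cite[3.4]{L1}, \cite[3.27]{L2}): the paper offers no proof of its own beyond the citation, and Larmour's proof is exactly the Springer-type argument you give (normalize diagonal entries and $\pi_D$ modulo $2w(D^*)$, split off the $\pi_D$-part via the involution $\Int(\pi_D)\circ\sigma$, and run the valuation count on a primitive isotropic vector). The one step you leave as a black box --- lifting an isotropic vector of $\overline{h}_1$ or $\overline{h}_2$ back to $h_1$ or $h_2$ --- is precisely Larmour's unimodular case \cite[2.3]{L1}, which this paper also invokes separately in the proof of its Corollary~\ref{Lamour2}, so citing it there rather than \cite{Knus} would close the loop cleanly. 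Your normalization $w(\pi_D)=w(t_D)$ does replace the given $\pi_D$ by $\sigma(\lambda)\pi_D\lambda$ and hence replaces $\Int(\pi_D)\circ\sigma$ by a conjugate involution; as you note this is transported back by the isometry $\langle c\rangle\mapsto\langle\sigma(\lambda)^{-1}c\,\sigma(\lambda)\rangle$, and is harmless for the isotropy statements.
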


\begin{cor}\label{Lamour2}
$u(D, \sigma, \varepsilon)=u(\overline D, \overline{\sigma}, \varepsilon)+u(\overline D, \overline{\Int(\pi_D)\circ\sigma}, \varepsilon\varepsilon')$. 
\end{cor}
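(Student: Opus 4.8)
The plan is to derive \cref{Lamour2} directly from \cref{Lamour} by a standard ``sup over decompositions'' argument. The key observation is that \cref{Lamour} gives, for every $\varepsilon$-hermitian space $h$ over $(D,\sigma)$, a decomposition $h \simeq h_1 \perp h_2 \pi_D$ with $h_1 \in \Herm^{\varepsilon}(D,\sigma)$, $h_2 \in \Herm^{\varepsilon\varepsilon'}(D, \Int(\pi_D)\circ\sigma)$, all diagonal entries of unit $w$-value, together with the equivalence: $h$ is anisotropic $\iff$ both $\overline{h}_1$ and $\overline{h}_2$ are anisotropic. Since passing to the residue preserves rank (the $w$-value-$0$ diagonal entries reduce to an orthogonal diagonalization of the residue form of the same length), this sets up a rank-preserving correspondence between anisotropic forms upstairs and pairs of anisotropic residue forms.

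First I would prove the inequality $u(D,\sigma,\varepsilon) \le u(\overline D, \overline\sigma, \varepsilon) + u(\overline D, \overline{\Int(\pi_D)\circ\sigma}, \varepsilon\varepsilon')$. Take an anisotropic $h \in \Herm^{\varepsilon}(D,\sigma)$ with $\Rank(h) = u(D,\sigma,\varepsilon)$ (or, if the sup is not attained, with rank arbitrarily close, handled by the usual $n \le u$ for every $n$ argument). Apply \cref{Lamour} to get $h_1, h_2$ as above. By the equivalence (i)$\Leftrightarrow$(iii), both $\overline{h}_1$ and $\overline{h}_2$ are anisotropic, over $(\overline D, \overline\sigma)$ and $(\overline D, \overline{\Int(\pi_D)\circ\sigma})$ respectively. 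Hence $\Rank(\overline{h}_1) \le u(\overline D, \overline\sigma, \varepsilon)$ and $\Rank(\overline{h}_2) \le u(\overline D, \overline{\Int(\pi_D)\circ\sigma}, \varepsilon\varepsilon')$. Since $\Rank(h) = \Rank(h_1) + \Rank(h_2) = \Rank(\overline{h}_1) + \Rank(\overline{h}_2)$ (ranks are unchanged both by the orthogonal decomposition and by reduction of a form with unit diagonal), the desired inequality follows.

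Next I would prove the reverse inequality. Choose anisotropic residue forms $\overline{g}_1 \in \Herm^{\varepsilon}(\overline D, \overline\sigma)$ and $\overline{g}_2 \in \Herm^{\varepsilon\varepsilon'}(\overline D, \overline{\Int(\pi_D)\circ\sigma})$ realizing (or approaching) the two residue $u$-invariants. Lift each diagonal entry $\overline{b}_i$ to $b_i \in R_w^{*}$ with $\sigma(b_i) = \varepsilon b_i$, respectively $(\Int(\pi_D)\circ\sigma)(c_j) = \varepsilon\varepsilon' c_j$ — this is possible because the residue involution is induced by $\sigma$ (resp.\ $\Int(\pi_D)\circ\sigma$) and one can choose the lift to be $\sigma$-symmetric up to the sign by averaging, using $\Char\overline k \ne 2$. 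Set $g_1 = \langle b_1, \dots \rangle$, $g_2 = \langle c_1, \dots\rangle$, and $h = g_1 \perp g_2 \pi_D$. Then $\overline{h}_1 = \overline{g}_1$ and $\overline{h}_2 = \overline{g}_2$ are both anisotropic, so by \cref{Lamour} (iii)$\Rightarrow \neg$(i), $h$ is anisotropic, with $\Rank(h) = \Rank(\overline{g}_1) + \Rank(\overline{g}_2)$. This gives $u(D,\sigma,\varepsilon) \ge u(\overline D, \overline\sigma, \varepsilon) + u(\overline D, \overline{\Int(\pi_D)\circ\sigma}, \varepsilon\varepsilon')$, completing the equality.

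The main obstacle is the lifting step in the reverse direction: one must check that an anisotropic residue form can be lifted to a form over $D$ with unit-valued, correctly-symmetric diagonal entries, so that \cref{Lamour} applies and identifies its two residue components with the prescribed $\overline{g}_1, \overline{g}_2$. This requires knowing that every $\overline\sigma$-symmetric unit in $\overline D$ lifts to a $\sigma$-symmetric unit in $R_w$ (and similarly for the twisted involution on the $\pi_D$-part), which follows from the compatibility $\overline\sigma(\overline x) = \overline{\sigma(x)}$ together with the $2$-divisibility available since $\Char\overline k \ne 2$; once this is in place, everything else is bookkeeping with ranks and the equivalences already furnished by \cref{Lamour}. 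A minor secondary point is the treatment of the case where the supremum defining a $u$-invariant is infinite or not attained, which is handled uniformly by phrasing the argument in terms of ``for every $n \le u$'' rather than a maximizing form.
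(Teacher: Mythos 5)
Your proposal is correct and follows essentially the same route as the paper: the upper bound via the rank identity $\Rank(h)=\Rank(\overline{h}_1)+\Rank(\overline{h}_2)$ combined with the isotropy equivalence of \cref{Lamour}, and the lower bound by lifting diagonal entries of anisotropic residue forms to $\sigma$-symmetric units via the averaging $\frac{1}{2}(b_i+\varepsilon\sigma(b_i))$, which is exactly the paper's construction. The only cosmetic difference is that the paper argues the upper bound in contrapositive form and does not comment on the case of an unattained supremum.
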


\begin{proof}
Suppose $h\in \Herm^{\varepsilon}(D, \sigma)$ and $h\simeq h_1\perp h_2\pi_D$ as in \cref{Lamour}.  
Since $\Rank(h)=\Rank(h_1)+\Rank(h_2)=\Rank(\overline{h_1})+\Rank(\overline{h_2})$, if $\Rank(h) > 
u(\overline D, \overline{\sigma}, \varepsilon)  +  u(\overline D, \overline{\Int(\pi_D)\circ\sigma}, \varepsilon\varepsilon')$, 
then $\Rank(\overline{h_1})>u(\overline D, \overline{\sigma}, \varepsilon)$ or $\Rank(\overline{h_2})>u(\overline D, \overline{\Int(\pi_D)\circ\sigma}, \varepsilon\varepsilon')$. 
Then $\overline h_1$ or $\overline h_2$ is isotropic. 
By \cref{Lamour}, $h$ is isotropic. 
Hence $u(D, \sigma, \varepsilon)\le u(\overline D, \overline{\sigma}, \varepsilon)+u(\overline D, \overline{\Int(\pi_D)\circ\sigma}, \varepsilon\varepsilon')$. 

Conversely, 
suppose $g_1=\langle a_1,\cdots, a_m\rangle\in\Herm^{\varepsilon}(\overline D, \overline{\sigma})$ such that $\overline{\sigma}(a_i)=\varepsilon a_i$, $m=u(\overline D, \overline{\sigma}, \varepsilon)$ and $g_1$ is anisotropic. 
Since $a_i\ne 0$, there exists $b_i\in R_w$, $w(b_i)=0$ such that $\overline{b_i}=a_i$. 
Let $c_i=\frac{1}{2}(b_i+\varepsilon\sigma(b_i))$. 
Then $\sigma(c_i)=\varepsilon c_i$ and $\overline{c_i}=a_i$. 
Let $h_1=\langle c_1,\cdots, c_m\rangle\in\Herm^{\varepsilon}(D, \sigma)$. 
Then $\overline{h_1}=g_1$ and by \cite[2.3]{L1}, $h_1$ is anisotropic. 

Suppose $g_2=\langle a_{m+1},\cdots, a_{m+n}\rangle\in\Herm^{\varepsilon\varepsilon'}(\overline D, \overline{\Int(\pi_D)\circ\sigma})$ such that  
$g_2$ is anisotropic. 
Similar to the previous paragraph, 
there exists $h_2\in\Herm^{\varepsilon\varepsilon'}(D, \Int(\pi_D)\circ\sigma)$ such that 
$\overline{h_2}=g_2$ and $h_2$ is anisotropic. 

By \cref{Lamour}, $h=h_1\perp h_2\pi_D$ is anisotropic and $\Rank(h)=m+n$. 
$u(D, \sigma, \varepsilon)\ge u(\overline D, \overline{\sigma}, \varepsilon)+u(\overline D, \overline{\Int(\pi_D)\circ\sigma}, \varepsilon\varepsilon')$. 
\end{proof}

\begin{lem}\label{choice_of_pid}
Suppose $D$ is ramified at the discrete valuation of $k$,  {$Z(D)=k$ and $\Per(D)|2$. }
Then there exist an involution $\sigma$ on $D$ of first kind and elements $\alpha, {\pi_D} \in D$ satisfying the following conditions: 

(a) $\overline{\sigma}$ is an involution of the second kind;

(b) $\alpha^2$ is  a unit at the valuation $v$ of $k$ and $Z(\overline{D}) = \overline{k}( \overline{\alpha})$;

(c) $\pi_D$ is a parameter of $D$, $\sigma(\pi_D) =  \pm \pi_D$ and 
$\overline{\Int(\pi_D)\circ\sigma}$ is of the first kind. 
\end{lem}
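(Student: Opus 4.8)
The plan is to read off (a)--(c) from the valuation theory of $D$. Since $\Char\overline k\ne 2$ and $\Per(D)\mid 2$, the algebra $D$ has period prime to the residue characteristic, hence is tamely ramified over $k$. First I would pin down the residue data. The ramification character $\partial_v(D)\in\Hom(G_{\overline k},\mathbf Q/\mathbf Z)$ is killed by $2$, and ``$D$ ramified at $v$'' means $\partial_v(D)\ne 0$, so $\partial_v(D)$ has order exactly $2$; by the structure theory of tame division algebras \cite{W1} this forces the ramification index $e=[\Gamma_D:\Gamma_k]$ to equal $2$ and $Z(\overline D)/\overline k$ to be the separable quadratic extension attached to $\partial_v(D)$. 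Lifting the class in $\overline k^\times/\overline k^{\times 2}$ corresponding to $\partial_v(D)$ to a unit $u\in k^\times$, the unramified quadratic extension $\ell=k(\sqrt u)$ has residue $Z(\overline D)$ and, again by \cite{W1}, embeds into $D$; set $\alpha=\sqrt u\in\ell\subseteq D$, so $\alpha^2=u$ is a $v$-unit and $Z(\overline D)=\overline k(\overline\alpha)$, which is (b).

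The one genuinely structural step is the claim that \emph{every parameter $\pi_D$ of $D$ conjugates $Z(\overline D)$ by the nontrivial element of $\Gal(Z(\overline D)/\overline k)$}. I would argue it in two moves. First, for $d\in D^\times$ the automorphism $\overline{\Int(d)}$ of $\overline D$ restricts to an element of $\Gal(Z(\overline D)/\overline k)$ that depends only on $w(d)+\Gamma_k$: replacing $d$ by $dr$ with $w(r)=0$ alters $\overline{\Int(d)}$ by an inner automorphism of $\overline D$, trivial on the centre, and multiplying $d$ by an element of $k^\times$ changes nothing. Second, by Skolem--Noether there is $d_0\in D^\times$ with $\Int(d_0)|_\ell$ equal to the nontrivial automorphism of $\ell/k$, so $d_0\alpha d_0^{-1}=-\alpha$ and $\overline{\Int(d_0)}$ is nontrivial on $Z(\overline D)=\overline k(\overline\alpha)$; by the first move this forces $w(d_0)\notin\Gamma_k$, i.e.\ $d_0$ is a parameter, and since all parameters have $w$-value congruent modulo $\Gamma_k$, the first move then shows that every parameter acts nontrivially on $Z(\overline D)$.

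Now I would assemble $\sigma$ and $\pi_D$. By Albert's theorem $D$ has an involution $\sigma_1$ of the first kind, and by \cite[2.7]{L2} there is a parameter $\pi_D$ (after scaling by a power of a uniformizer of $k$, which is harmless) with $\sigma_1(\pi_D)=\pm\pi_D$; the induced involution $\overline{\sigma_1}$ of $\overline D$ is of the first kind (trivial on $Z(\overline D)$) or of the second kind (nontrivial there). If $\overline{\sigma_1}$ is of the second kind, put $\sigma=\sigma_1$: then (a) is immediate; the symmetry $\sigma(\pi_D)=\pm\pi_D$ holds, making $\Int(\pi_D)\circ\sigma$ an involution; and $\overline{\Int(\pi_D)\circ\sigma}=\overline{\Int(\pi_D)}\circ\overline{\sigma_1}$ restricts on $Z(\overline D)$ to the composite of two nontrivial automorphisms of the quadratic extension, hence to the identity, so $\overline{\Int(\pi_D)\circ\sigma}$ is of the first kind, giving (c). If $\overline{\sigma_1}$ is of the first kind, put $\sigma=\Int(\pi_D)\circ\sigma_1$: it is again a first-kind involution of $D$ (since $\sigma_1(\pi_D)=\pm\pi_D$ makes $\pi_D\sigma_1(\pi_D)^{-1}$ central), it satisfies $\sigma(\pi_D)=\pi_D\sigma_1(\pi_D)\pi_D^{-1}=\pm\pi_D$, its residue $\overline\sigma=\overline{\Int(\pi_D)}\circ\overline{\sigma_1}$ is nontrivial on $Z(\overline D)$ (nontrivial composed with trivial), so $\overline\sigma$ is of the second kind, giving (a); and $\overline{\Int(\pi_D)\circ\sigma}=\overline{\Int(\pi_D^2)}\circ\overline{\sigma_1}$ is trivial on $Z(\overline D)$, since $w(\pi_D^2)\in\Gamma_k$ makes $\overline{\Int(\pi_D^2)}$ trivial there while $\overline{\sigma_1}$ is trivial there, so $\overline{\Int(\pi_D)\circ\sigma}$ is of the first kind, giving (c).

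The main obstacle I expect is the structural input of the first two paragraphs: recognising $Z(\overline D)$ as the quadratic extension attached to $\partial_v(D)$, obtaining the embedding $\ell\hookrightarrow D$, and proving that a parameter acts nontrivially on $Z(\overline D)$. This is exactly where the hypotheses $\Per(D)\mid 2$ and ``$D$ ramified'' are used; granting it, the dichotomy on the kind of $\overline{\sigma_1}$ makes the rest of the construction routine.
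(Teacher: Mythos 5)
Your proof is correct, and it reaches (a)--(c) by a genuinely different route from the paper's. The paper first lifts the nontrivial automorphism of $Z(\overline D)/\overline k$ to a second-kind involution of $\overline D$ and then to a first-kind involution $\sigma$ of $D$ (citing \cite[Prop.~4]{CDTWY}), takes the parameter $t_D$ of \cite[Prop.~1.7]{JW} whose residue conjugation is nontrivial on $Z(\overline D)$, and then symmetrizes $t_D$ by hand in two cases ($\pi_D=t_D+\sigma(t_D)$ or $\pi_D=\alpha t_D-\sigma(\alpha t_D)$), with explicit residue computations to check that $w(\pi_D)=w(t_D)$ and that $\overline{\Int(\pi_D)\circ\sigma}$ stays of the first kind. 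You instead start from an arbitrary first-kind involution $\sigma_1$ (Albert) together with an arbitrary $\sigma_1$-(anti)symmetric parameter $\pi_D$ (Larmour \cite[2.7]{L2} plus a harmless scaling by $k^\times$ --- the same input the paper already uses in its setup before Proposition 2.4), and then either keep $\sigma_1$ or twist it to $\Int(\pi_D)\circ\sigma_1$ according to whether $\overline{\sigma_1}$ is of the second or first kind. What makes this work, and what the paper does not exploit, is your observation that $\overline{\Int(d)}|_{Z(\overline D)}$ depends only on $w(d)$ modulo $\Gamma_k$ (the well-definedness of the homomorphism $\Gamma_D/\Gamma_k\to\Gal(Z(\overline D)/\overline k)$, surjective by your Skolem--Noether argument applied to the inertial lift of $Z(\overline D)$); hence \emph{every} parameter induces the nontrivial automorphism of $Z(\overline D)$, and $\overline{\Int(\pi_D^2)}$ is automatically trivial there. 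This renders the paper's Case 1/Case 2 computation unnecessary and replaces the appeal to \cite[Prop.~4]{CDTWY} by the twisting trick. Both arguments rest on the same structural facts about tame ramified division algebras over complete discretely valued fields ($Z(\overline D)$ is the quadratic extension attached to the ramification and lifts to an inertial subfield containing your $\alpha$). The only substantive difference in output is that the paper's construction also arranges $\sigma(\alpha)=-\alpha$, which is not part of the statement and is not needed for conditions (a)--(c).
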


\begin{proof} Suppose  $D$ is ramified.  
Then $D$ is Brauer equivalent to $D_0\otimes(u,\pi)$ with 
$D_0$ a central division algebra over $k$ unramified at $v$, $\pi \in k^*$ a parameter
 and $u\in k^*\setminus{k^*}^2$ a unit at $v$. 
Furthermore, $\overline{D}$ Brauer equivalent to $ \overline{D_0} \otimes \overline{k}(\overline{\sqrt{u}})$ and ${Z(\overline{D})=\overline{k}(\overline{\sqrt{u}})}$ 
 \cite[8.77]{TW15}. 

(a) By \cite[prop.~4]{CDTWY}, the nontrivial automorphism of $Z(\overline{D})/\overline{k}$ extends to an involution on $\overline{D}$ of the second kind and it can be lifted to an involution $\sigma$ on $D$ of the first kind. 

(b) Since $k$ is complete, by \cite[p.~53, Lem.~1]{CDTWY}, there exists $\alpha \in D$ such that $\alpha^2\in Z(D)$, $\overline{\alpha} \in Z(\overline{D})$ corresponds $\overline{\sqrt{u}}$ in the isomorphism ${Z(\overline{D})=\overline{k}(\overline{\sqrt{u}})}$ and $\sigma(\alpha)=-\alpha$. 

(c) By \cite[prop.~1.7]{JW}, there exists a parameter $t_D \in D$ such that $\overline{\Int(t_D)}$ is the non-trivial $Z(\overline{D})/\overline{k}$-automorphism.
Since $\overline{\sigma}$ is of the second kind and $\overline{\Int(t_D)}$ induces the non-trivial automorphims of $Z(\overline{D})$, $\overline{\Int(t_D)\circ\sigma}$ is of the first kind.
  Since $\sigma$ is an involution, $w(t_D)  = w(\sigma(t_D))$ and hence 
  $\overline{\sigma(t_D)t_D^{-1}}\ne 0\in\overline{D}$.
  
Case 1: Suppose that $\overline{\sigma(t_D) t_D^{-1}} = 1 $. Let $\pi_D = t_D + \sigma(t_D)$. 
Then $\sigma(\pi_D) = \pi_D$.   
  Since $\pi_Dt_D^{-1} =  1 +  \sigma(t_D) t_D^{-1}$, $\overline{\pi_Dt_D^{-1}} = 1 + \overline{\sigma(t_D)t_D^{-1}}
  = 1 + 1 = 2\ne 0$. Hence $w(\pi_D) = w(t_D)$.
  Since $\overline{\pi_Dt_D^{-1}} = 2$,   
  $\overline{\Int(\pi_D)\circ\sigma} = \overline{\Int(t_D)\circ\sigma}$ and hence  
  $\overline{\Int(\pi_D)\circ\sigma}$ is of the first kind. 
Thus $\pi_D$ satisfies (c). 
  
Case 2: Suppose that $\overline{\sigma(t_D) t_D^{-1}} \ne 1$. Let $\pi_D =  \alpha t_D - \sigma(\alpha t_D )$.
  Then $\sigma(\pi_D) = - \pi_D$.
  We have $\pi_Dt_D^{-1} = \alpha -  \sigma(t_D)\sigma(\alpha)t_D^{-1}$.
  Since $\overline{\sigma(\alpha)} = - \overline{\alpha}$ and $ \overline{t_D\alpha t_D^{-1}} 
  = -\overline{\alpha}$, we have  
$$
\begin{array}{lll}
\overline{\pi_Dt_D^{-1}}
&=&\overline{\alpha} - \overline{\sigma(t_D)\sigma(\alpha) t_D^{-1}}\\
&=&\overline{\alpha } + \overline{\sigma (t_D)t_D^{-1}t_D\alpha t_D^{-1}}\\
 &=&\overline{\alpha} + \overline{\sigma(t_D)t_D^{-1}} (-\overline{\alpha})\\
&=  &( 1-  \overline{\sigma(t_D)t_D^{-1}})\overline{\alpha} \ne 0
\end{array}
$$
Hence $w(\pi_D) = w(t_D)$.  Since $\overline{\sigma(\alpha)} = - \overline{\alpha}$,
$\alpha^2 \in k$ 
and $\overline{t_D\alpha t_D^{-1}} = -\overline{\alpha}$, 
we have $\overline{\sigma(t_D)\alpha \sigma(t_D)^{-1}} = -\overline{\alpha}$  and

$$
  \begin{array}{lll}
&&\overline{(\pi_D\alpha \pi_D^{-1} + \alpha) \pi_Dt_D^{-1}} \\
& = & \overline{\pi_D \alpha t_D^{-1}} + 
\overline{\alpha \pi_D t_D^{-1}} \\
& = &  \overline{(\alpha t_D - \sigma(t_D)\sigma(\alpha)) \alpha t_D^{-1}}
+ \overline{\alpha (\alpha t_D - \sigma(t_D) \sigma(\alpha))t_D^{-1}}\\
& = &  \overline{\alpha t_D\alpha t_D^{-1}}- \overline{\sigma(t_D)\sigma(\alpha)\alpha t_D^{-1}}
+ \overline{\alpha^2}
+ \overline{\alpha \sigma(t_D) \alpha t_D^{-1}} \\
& = & -\overline{\alpha^2} + \overline{\sigma(t_D) \alpha^2 t_D^{-1}} + \overline{\alpha^2}
+ \overline{\alpha (\sigma(t_D) \alpha \sigma(t_D)^{-1})\sigma(t_D)t_D^{-1}} \\
& = & \overline{\alpha^2 \sigma(t_D)t_D^{-1}} -   \overline{\alpha^2 \sigma(t_D)t_D^{-1}} = 0.
\end{array}
$$
Since $\overline{\pi_Dt_D^{-1}} \ne 0$, $\overline{\pi_D\alpha\pi_D^{-1} + \alpha} = 0$
and hence $\overline{(\Int(\pi_D)\circ\sigma)}(\overline{\alpha}) =\overline{\alpha}$.
Thus $\pi_D$ satisfies (c). 

To summarize, $\sigma$, $\alpha$ and $\pi_D$ satisfy required properties. 
\end{proof}

\begin{cor}\label{Lamour3}
Suppose {$Z(D)=k$ and $\Per(D)=2$}.  

\noindent
(1) If $D$ is unramified at the discrete valuation of $k$, then 
$$u^{+}(D)=2u^{+}(\overline D)  \text{ and }   u^{-}(D)=2u^{-}(\overline D).$$ 

\noindent
(2) If $D$ is ramified at the discrete valuation of $k$,  
 then 
$$u^{+}(D)=u^0(\overline{D}/\overline{k})+u^{+}(\overline{D}) \text{ and } u^{-}(D)=u^0(\overline{D}/\overline{k})+u^{-}(\overline{D}).$$
\end{cor}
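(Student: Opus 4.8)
The plan is to deduce both parts from \cref{Lamour2}, which expresses $u(D,\sigma,\varepsilon)$ as a sum of two Hermitian $u$-invariants of the residue algebra $\overline D$ once an involution $\sigma$ of the first kind on $D$ and a parameter $\pi_D$ with $\sigma(\pi_D)=\varepsilon'\pi_D$, $\varepsilon'\in\{1,-1\}$, have been fixed. In each case the strategy is: choose $\sigma$ and $\pi_D$ conveniently, evaluate the two summands in terms of $u^{\pm}(\overline D)$ and $u^0(\overline D)$, and — the delicate point — match the signs so that the ``$+$''-invariants pair with the ``$+$''-invariants and the ``$-$''-invariants with the ``$-$''-invariants.

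For (1), suppose $D$ is unramified, so that $w(D^*)=v(k^*)$. Then a uniformizer $\pi$ of $k$ is a parameter of $(D,w)$; it is central, so $\sigma(\pi)=\pi$, and we may take $\pi_D=\pi$, $\varepsilon'=1$, with $\Int(\pi_D)$ equal to the identity on $D$. Thus \cref{Lamour2} gives $u(D,\sigma,\varepsilon)=2\,u(\overline D,\overline\sigma,\varepsilon)$ for every involution $\sigma$ of the first kind. Since $D$ is unramified we have $Z(\overline D)=\overline k$, so $\overline\sigma$ is again of the first kind, and it is of the same type as $\sigma$ (the residue of a first-kind involution preserves orthogonal/symplectic type). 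Choosing $\sigma$ orthogonal — possible because $\Per(D)=2$ — and taking $\varepsilon=1$, resp.\ $\varepsilon=-1$, yields $u^+(D)=2u^+(\overline D)$ and $u^-(D)=2u^-(\overline D)$.

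For (2), suppose $D$ is ramified. Apply \cref{choice_of_pid} to obtain an involution $\sigma$ of the first kind, a unit $\alpha$, and a parameter $\pi_D$ with $\sigma(\pi_D)=\varepsilon'\pi_D$ such that $\overline\sigma$ is of the second kind and $\tau:=\overline{\Int(\pi_D)\circ\sigma}$ is of the first kind. Then \cref{Lamour2} reads $u(D,\sigma,\varepsilon)=u(\overline D,\overline\sigma,\varepsilon)+u(\overline D,\tau,\varepsilon\varepsilon')$. As $\overline\sigma$ is unitary, the first summand equals $u^0(\overline D)$ for either sign of $\varepsilon$ (Mahmoudi's identity $u(\overline D,\overline\sigma,1)=u(\overline D,\overline\sigma,-1)$), while the second is $u^+(\overline D)$ or $u^-(\overline D)$. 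Letting $\varepsilon$ run over $\{1,-1\}$ already gives the set equality $\{u^+(D),u^-(D)\}=\{u^0(\overline D)+u^+(\overline D),\,u^0(\overline D)+u^-(\overline D)\}$; the content is to see which is which.

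The main obstacle is exactly this matching: one must show that $(\varepsilon\varepsilon',\tau)$ is a pair of ``$+$''-type precisely when $(\varepsilon,\sigma)$ is, equivalently that $\tau$ has the same type as $\sigma$ when $\varepsilon'=1$ and the opposite type when $\varepsilon'=-1$. I would prove this in two steps. First, on $D$ itself: since $\deg D=\Ind D=2^m$ with $m\ge 1$ (because $\Per(D)=2$), in particular even, the standard formula for involutions of the first kind on an even-degree algebra (e.g.\ \cite{inv}) shows that $\Int(\pi_D)\circ\sigma$ has the same type as $\sigma$ if $\sigma(\pi_D)=\pi_D$ and the opposite type if $\sigma(\pi_D)=-\pi_D$. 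Second, passing to the residue: $\tau=\overline{\Int(\pi_D)\circ\sigma}$, which is of the first kind by \cref{choice_of_pid}(c), has the same type as $\Int(\pi_D)\circ\sigma$, by the compatibility of the residue with the type of a first-kind involution in the valued setting of \cite{L1,L2,JW}. Combining the two steps gives the claim, and then, choosing $\varepsilon$ with $u^+(D)=u(D,\sigma,\varepsilon)$ (so $\varepsilon=1$ if $\sigma$ is orthogonal, $\varepsilon=-1$ if $\sigma$ is symplectic), the sign $\varepsilon\varepsilon'$ is precisely the one for which $u(\overline D,\tau,\varepsilon\varepsilon')=u^+(\overline D)$; hence $u^+(D)=u^0(\overline D)+u^+(\overline D)$, and the other choice of $\varepsilon$ gives $u^-(D)=u^0(\overline D)+u^-(\overline D)$. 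The second step — identifying the type of the residue involution — is the genuinely delicate point, and it is exactly what the construction in \cref{choice_of_pid} is set up to handle; I would sanity-check the whole bookkeeping on the case $D=(u,\pi)$ a ramified quaternion algebra, where $\overline D=\overline k(\sqrt{\overline u})$, $\tau=\Id$, and $u^-(\overline D)=0$.
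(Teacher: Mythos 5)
Your proof follows the same route as the paper's: part (1) is obtained from \cref{Lamour2} by taking $\pi_D=\pi$ central (so $\varepsilon'=1$ and $\Int(\pi_D)\circ\sigma=\sigma$), and part (2) from \cref{Lamour2} combined with the choice of $\sigma$ and $\pi_D$ supplied by \cref{choice_of_pid}. The only difference is that you make explicit the type/sign bookkeeping --- that $\Int(\pi_D)\circ\sigma$ changes type exactly when $\varepsilon'=-1$, and that reduction preserves the type of a first-kind involution (which one can verify by counting $\dim\Sym$ before and after reduction) --- a point the paper's proof of (2) leaves entirely implicit in ``by \cref{Lamour2}, we have the required result''; your treatment of it is correct.
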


\begin{proof}
 Suppose  $D$ is unramified. Then we can take $\pi_D = \pi$, where $\pi$ is a parameter of $k$. 
Since $\sigma(\pi) = \pi$, we have 
$\varepsilon' = 1$ and $\Int(\pi_D)\circ\sigma = \sigma$. 
Hence, by \cref{Lamour2}, we have the required result.

Suppose $D$ is ramified. Then choose $\sigma$ and $\pi_D$ as in \cref{choice_of_pid}.
Then, by \cref{Lamour2}, we have the required result.
\end{proof}

Let $K/k$ be a quadratic extension.
Let $D$ be a central division algebra over $k$ with an involution $\sigma$ of the first kind.
Then $\sigma\otimes \iota$ is an involution on $D\otimes_k K$ of the second kind with 
$\iota$ being the non-trivial automorphism of $K/k$.  

{\begin{rem}\label{rem2.7}
Suppose $D\otimes_k K$ is division. 
Then there are three possibilities of ramification: 

(1) $K/k$ is unramified and $D\otimes_k K/K$ is unramified; 

(2) $K/k$ is unramified and $D\otimes_k K/K$ is ramified; 

(3) $K/k$ is ramified and $D\otimes_k K/K$ is unramified; 

We show that ``$K/k$ is ramified and $D\otimes_k K/K$ is ramified'' cannot happen. 

In fact, if $K/k$ is ramified, then $K = k(\sqrt{\pi})$ for some parameter $\pi \in k$
and $\overline{K} = \overline{k}$.  
If $D/k$ is unramified, then $D\otimes_k K/K$ is unramified.
Suppose $D/k$ is ramified. Then $D$ is Brauer equivalent to $D_0 \otimes (u, \pi)$ for some 
$D_0$ unramified on $k$ and $u \in k$ a unit at the valuation of $k$ \cite[8.77]{TW15}. 
Thus $D \otimes_k K$ is Brauer equivalent to $D_0 \otimes_k K$ and hence 
$D \otimes_k K/K$ is unramified.  

Consequently, (2) and (3) can be shortened as 

\textit{(2) $D\otimes_k K/K$ is ramified. (3) $K/k$ is ramified. }
\end{rem}}

\begin{rem}\label{rem2.8}
Suppose we are in case (2) of \cref{rem2.7}. 
Suppose $K=k(\sqrt{\lambda})$ and $D$ is Brauer equivalent to $D_0 \otimes (u, \pi)$ for some 
$D_0$ unramified on $k$ and $u \in k$ a unit at the valuation of $k$. 
Then $\overline{K} = \overline{k}(\overline{\sqrt{\lambda}})$, 
$Z(\overline{D}) = \overline{k}(\overline{\sqrt{u}})$ and $Z(\overline{D \otimes_k K}) = \overline{k}(\overline{\sqrt{u}}, \overline{\sqrt{\lambda}})$.
{Here $u$ and $\lambda$ are in different square classes of $k$, otherwise $(u, \pi)_K$ is split and hence $D\otimes_k K$ is unramified over $K$. }
Since $\overline{D \otimes_k K} = \overline{D} \otimes_{\overline{k}} \overline{K} 
= \overline{D} \otimes \overline{k}(\overline{\sqrt{u}}, \overline{\sqrt{\lambda}})$ and $\overline{D}$
has an involution of the first kind, $\overline{D \otimes_k K}$ has three possible types of involutions of 
second kind with fixed fields $\overline{k_1}=\overline{k}(\overline{\sqrt{u}})$, $\overline{k_2}=\overline{k}(\overline{\sqrt{\lambda}})$ and
$\overline{k_3}=\overline{k}(\overline{\sqrt{u\lambda}})$ respectively.
The corresponding $u^0$
are written as $u^0(\overline{D \otimes_k K}/\overline{k}_1)$, $u^0(\overline{D \otimes_k K}/
\overline{k}_2)$ and $u^0(\overline{D \otimes_k K}/\overline{k}_3)$.

\end{rem}

\begin{cor}\label{Lamour4} 
Let $K/k$ be a quadratic extension and let $\iota$ be the non-trivial automorphism of $K/k$. 
Let $D$ be a central division algebra over $k$ with an involution $\sigma$ of first kind such that $D \otimes_k K$ is division. 

    	(1) If $D\otimes_k K$ is  unramified  at the discrete valuation of $K$ and  $K/k$ is unramified, 
	then $u^0(D \otimes_k K/k)=2u^0(\overline{D} \otimes_{\overline{k}} \overline{K}/\overline{k})$. 

(2) If $D \otimes_k K/K$ is ramified, 
then $u^0(D \otimes_k K/k)=  u^0(\overline{D} \otimes_{\overline{k}} \overline{K}/\overline{k}_1)  
+ u^0(\overline{D} \otimes_{\overline{k}} \overline{K}/\overline{k}_3)$. 

	(3) If $K/k$ is ramified, then $u^0(D \otimes_k K/k)=u^+(\overline{D})+u^-(\overline{D})$. 
\end{cor}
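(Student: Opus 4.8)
The plan is to descend to the residue field via \cref{Lamour2}, choosing in each case a ramification parameter of $D\otimes_k K$ adapted to the involution $\tau=\sigma\otimes\iota$. Since $u^0$ depends only on the Brauer class of $D\otimes K$ (\cref{Morita}) and is independent of the chosen involution of the second kind (\cite[2.2]{Mah}), it suffices to compute $u(D\otimes K,\tau,1)$. In each case I exhibit a uniformizer $\pi_{D\otimes K}$ of $D\otimes K$ with $\tau(\pi_{D\otimes K})=\varepsilon'\pi_{D\otimes K}$ for a suitable $\varepsilon'\in\{1,-1\}$ and apply \cref{Lamour2}:
\[
u^0(D\otimes K)=u(\overline{D\otimes K},\overline{\tau},1)+u\bigl(\overline{D\otimes K},\overline{\Int(\pi_{D\otimes K})\circ\tau},\varepsilon'\bigr).
\]

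Cases (1) and (3) are handled by taking $\pi_{D\otimes K}$ in the centre $K$, so that $\Int(\pi_{D\otimes K})=\Id$. In case (1), $D\otimes K/K$ is unramified, so a uniformizer $\pi$ of $k$ is a uniformizer of $K$ and of $D\otimes K$; it is central and $\tau$-fixed, hence $\varepsilon'=1$ and both summands equal $u(\overline{D\otimes K},\overline{\tau},1)$. As $K/k$ is unramified, $\overline K/\overline k$ is a quadratic extension with $\overline{\iota}$ nontrivial, so $\overline{\tau}$ is of the second kind, and $\overline{D\otimes K}=\overline D\otimes_{\overline k}\overline K$ because the residue functor commutes with unramified base change; thus $u^0(D\otimes K)=2\,u^0(\overline D\otimes\overline K)$. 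In case (3), \cref{rem2.7} gives $K=k(\sqrt{\pi})$ with $\pi$ a uniformizer of $k$ and $D\otimes K/K$ unramified; take $\pi_{D\otimes K}=\sqrt{\pi}$, which is central, is a uniformizer of $D\otimes K$, and satisfies $\tau(\sqrt{\pi})=\iota(\sqrt{\pi})=-\sqrt{\pi}$, so $\varepsilon'=-1$. Here $\overline K=\overline k$ and $\iota$ fixes $k$, so $\overline{\tau}$ restricts to the identity on the residue centre and is of the first kind; identifying $(\overline{D\otimes K},\overline{\tau})$ with $(\overline D,\overline{\sigma})$, the two summands are $u(\overline D,\overline{\sigma},1)$ and $u(\overline D,\overline{\sigma},-1)$, i.e.\ $u^+(\overline D)$ and $u^-(\overline D)$ in some order, giving $u^0(D\otimes K)=u^+(\overline D)+u^-(\overline D)$.

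Case (2) is the main case. By \cref{rem2.7}, ``$D\otimes K/K$ ramified'' forces $K/k$ unramified, and then (cf.\ \cref{rem2.8}) $D/k$ is ramified, $K=k(\sqrt{\lambda})$, $D$ is Brauer equivalent to $D_0\otimes(u,\pi)$ with $D_0$ unramified and $u$ a nonsquare unit, $\overline K=\overline k(\sqrt{\lambda})$, $Z(\overline D)=\overline k(\sqrt{u})$, $Z(\overline{D\otimes K})=\overline k(\sqrt{u},\sqrt{\lambda})$ (a field, as $\overline u\ne\overline\lambda$ in $\overline k^*/\overline k^{*2}$), and $\overline{D\otimes K}\cong\overline D\otimes_{\overline k}\overline K$. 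Apply \cref{choice_of_pid} to $D$: it yields an involution $\sigma$ of the first kind on $D$ with $\overline{\sigma}$ of the second kind, an element $\alpha\in D$ with $\sigma(\alpha)=-\alpha$ and $\overline{\alpha}$ corresponding to $\sqrt{u}$ in $Z(\overline D)$, and a uniformizer $\pi_D\in D$ with $\sigma(\pi_D)=\pm\pi_D$ and $\overline{\Int(\pi_D)\circ\sigma}$ of the first kind; since $\overline{\sigma}(\sqrt{u})=-\sqrt{u}$ (as $\overline{\sigma}$ is of the second kind) this last condition means $\overline{\Int(\pi_D)}(\sqrt{u})=-\sqrt{u}$. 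Because $K/k$ is unramified, $\pi_D\otimes 1$ is still a uniformizer of $D\otimes K$ and $\tau(\pi_D\otimes 1)=\pm(\pi_D\otimes 1)$, so take $\pi_{D\otimes K}=\pi_D\otimes 1$. It remains to identify the two residue involutions on $Z(\overline{D\otimes K})=\overline k(\sqrt{u},\sqrt{\lambda})$: from $\overline{\alpha\otimes 1}=\sqrt{u}$, $\tau(\alpha\otimes 1)=-\alpha\otimes 1$ and $\tau|_K=\iota$, $\iota(\sqrt{\lambda})=-\sqrt{\lambda}$ we get $\overline{\tau}(\sqrt{u})=-\sqrt{u}$ and $\overline{\tau}(\sqrt{\lambda})=-\sqrt{\lambda}$, while $\overline{\Int(\pi_D\otimes 1)}$ sends $\sqrt{u}\mapsto-\sqrt{u}$ and fixes $\sqrt{\lambda}$ (as $\pi_D\otimes 1$ commutes with $1\otimes K$). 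Hence $\overline{\tau}$ fixes exactly $\overline k(\sqrt{u\lambda})=\overline{k}_3$ and $\overline{\Int(\pi_{D\otimes K})\circ\tau}$ fixes exactly $\overline k(\sqrt{u})=\overline{k}_1$, and both are of the second kind. Therefore, using the $\varepsilon'$-independence of $u^0$ and Morita invariance,
\[
u^0(D\otimes K)=u^0(\overline{D\otimes K}/\overline{k}_3)+u^0(\overline{D\otimes K}/\overline{k}_1)=u^0(\overline D\otimes\overline K/\overline{k}_1)+u^0(\overline D\otimes\overline K/\overline{k}_3).
\]

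The reductions and the vanishing of $\Int$ for a central uniformizer in cases (1) and (3) are routine. The crux is case (2): one must invoke precisely the involution $\sigma$ of \cref{choice_of_pid} so that $\alpha$ realizes $\overline{\tau}(\sqrt{u})=-\sqrt{u}$ and $\pi_D$ realizes $\overline{\Int(\pi_D)}(\sqrt{u})=-\sqrt{u}$, and then combine these with the action of $\iota$ on the $K$-factor to pin down the fixed fields $\overline{k}_1$ and $\overline{k}_3$ of the two residue involutions on the biquadratic centre — the requisite sign computations being exactly those performed in the proof of \cref{choice_of_pid}. A secondary point is the identification of residue algebras ($\overline{D\otimes K}\cong\overline D\otimes_{\overline k}\overline K$ when $K/k$ is unramified, and $\overline{D\otimes K}\cong\overline D$ when $K/k$ is ramified), which relies on the ramification dichotomy of \cref{rem2.7}, together with checking that $\pi_D\otimes 1$ stays a uniformizer of $D\otimes K$ once $K/k$ is unramified.
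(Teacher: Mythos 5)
Your proof is correct and follows essentially the same route as the paper: the same case split via \cref{rem2.7}, the same choices of parameter (a uniformizer $\pi$ of $k$ in case (1), $\sqrt{\pi}\in K$ in case (3), and the $\pi_D$ of \cref{choice_of_pid} in case (2)), and the same application of \cref{Lamour2}. Your case (2) in fact supplies the sign computations identifying the fixed fields $\overline{k}_1$ and $\overline{k}_3$ that the paper only asserts.
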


\begin{proof} 
(1) Suppose $D$ is unramified and $K/k$ is unramified.   Then $\overline{D \otimes_k K} =
\overline{D} \otimes_{\overline{k}} \overline{K}$ and $\overline{K}/\overline{k}$ is a quadratic extension. 
Let  $\pi$ be a parameter of $k$.
Take $\pi_D=\pi$. Then  $\sigma(\pi_D) = \pi_D$ and  $\overline{\Int(\pi_D)\circ(\sigma\otimes\iota)}=\overline{\sigma\otimes\iota}$. 
By \cref{Lamour2}, $u^0(D \otimes_k K/k)=2u^0(\overline{D} \otimes_{\overline{k}} \overline{K}/\overline{k})$.  

(2) Suppose $D$ is ramified. 
Suppose $\sigma$ and $\pi_D$ are as in \cref{choice_of_pid}. 
{Then the fixed field of $\overline{\sigma\otimes \iota}$ is $\overline{k}_3$ and 
the fixed field of $\overline{\Int(\pi_D)\circ(\sigma\otimes \iota)}$ is $\overline{k}_1$ (where $\overline{k}_1$ and $\overline{k}_3$
are as in \cref{rem2.8}).}
Thus, by \cref{Lamour2}, we have 
$u^0(D \otimes_k K/k)=  u^0(\overline{D} \otimes_{\overline{k}} \overline{K}/\overline{k}_1)  
+ u^0(\overline{D} \otimes_{\overline{k}} \overline{K}/\overline{k}_3)$.

(3) Suppose $K/k$ is ramified. 
Let $\sigma_0$ be an involution of the first kind on $D$ and
$\sigma\simeq \sigma_0\otimes \iota$, where $\iota$ is the non-trivial automorphism of $K/k$.
We have
$\overline{D\otimes_k K} = \overline{D}$ and $\overline{\sigma_0}\otimes \overline{\iota} = \overline{\sigma_0}$.
Let $\pi_D = \sqrt{\pi} \in K \subset D \otimes_k K$.
Then $\overline{\Int(\pi_D)\circ(\sigma_0\otimes \iota)} = \overline{\sigma_0}$. 
Thus, by  
 \cref{Lamour2}, $u(D\otimes_k K, \sigma,  \varepsilon) = u( \overline{D},  \overline{\sigma_0}, \varepsilon)
+ u(\overline{D}, \overline{\sigma_0}, -\varepsilon)$. 
Hence $u^0(D\otimes_k K/k)=u^+(\overline{D})+u^-(\overline{D})$. 
\end{proof}

We end this section with the following well known 
\begin{lem}
 \label{dvr}
 Let $k$ be a discrete valued field with residue field $\overline{k}$
 and completion $\widehat{k}$.  Suppose $\Char(\overline{k})\ne 2$. 
 Let $D$ be a division algebra over $k$ with center $K$. 
Let $\sigma$ be an involution on $D$ such that $K^{\sigma}=k$. 
 If $D \otimes \widehat{k}$ is division, 
 then $u(D, \sigma, \varepsilon) \ge u(D\otimes \widehat{k}, \widehat{\sigma}, \varepsilon)$, {where $\widehat{\sigma}=\sigma\otimes\Id_{\widehat{k}}$.}
\end{lem}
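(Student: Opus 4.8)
The plan is to descend an anisotropic hermitian space from the completion to $D$, using \cref{Lamour} over the complete field $\widehat k$ together with the lifting argument from the second half of the proof of \cref{Lamour2}. Write $\widehat D=D\otimes_k\widehat k$; it is division by hypothesis, with centre $\widehat K=K\otimes_k\widehat k$ and involution $\widehat\sigma=\sigma\otimes\Id$, and one checks $\widehat K^{\widehat\sigma}=\widehat k$. By the definition of the Hermitian $u$-invariant it suffices to show that for every $n$ and every anisotropic $h'\in\Herm^{\varepsilon}(\widehat D,\widehat\sigma)$ with $\Rank(h')=n$ there is an anisotropic $h\in\Herm^{\varepsilon}(D,\sigma)$ with $\Rank(h)=n$; taking the supremum over such $n$ then gives $u(D,\sigma,\varepsilon)\ge u(\widehat D,\widehat\sigma,\varepsilon)$, the case $u(\widehat D,\widehat\sigma,\varepsilon)=\infty$ included. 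Since $\widehat k$ is complete and $\widehat D$ is division, the valuation of $\widehat k$ extends uniquely to a valuation $w$ on $\widehat D$ as in \cite{W1}; because $k$ is dense in $\widehat k$, the subalgebra $D$ is dense in $\widehat D$, so $w|_D$ is a valuation of $D$ with the same value group, the same residue division algebra $\overline D$, and the same residue involution $\overline\sigma$ (here $\Char\overline k\ne2$ is used).

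First I would apply \cref{Lamour} to $h'$ over $\widehat k$: there are $\pi\in\widehat D$ with $\widehat\sigma(\pi)=\varepsilon'\pi$ and a decomposition $h'\simeq h_1'\perp h_2'\pi$ with $h_1'\in\Herm^{\varepsilon}(\widehat D,\widehat\sigma)$, $h_2'\in\Herm^{\varepsilon\varepsilon'}(\widehat D,\Int(\pi)\circ\widehat\sigma)$, all diagonal entries of $w$-value $0$, and $\overline{h_1'},\overline{h_2'}$ anisotropic with $\Rank(\overline{h_1'})+\Rank(\overline{h_2'})=n$. Approximating $\pi$ by some $b\in D$ with $w(b-\pi)>w(\pi)$ and setting $\pi_D=\frac{1}{2}(b+\varepsilon'\sigma(b))\in D$ yields $\sigma(\pi_D)=\varepsilon'\pi_D$, $w(\pi_D)=w(\pi)$, and $\overline{\Int(\pi_D)\circ\sigma}=\overline{\Int(\pi)\circ\widehat\sigma}$ on $\overline D$, so that $\pi_D$ is an admissible parameter element for a Larmour decomposition over $D$ and over $\widehat D$ at once. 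Then I would lift $\overline{h_1'}$ and $\overline{h_2'}$ to $h_1\in\Herm^{\varepsilon}(D,\sigma)$ and $h_2\in\Herm^{\varepsilon\varepsilon'}(D,\Int(\pi_D)\circ\sigma)$ with $\overline{h_i}=\overline{h_i'}$ and all diagonal entries of $w$-value $0$, exactly as in the second half of the proof of \cref{Lamour2}: choose diagonal representatives in $R_w\cap D$ (possible since $D$ is dense in $\widehat D$), symmetrise via $x\mapsto\frac{1}{2}(x+\varepsilon\sigma(x))$, and invoke \cite[2.3]{L1} for the anisotropy of each $h_i$. Put $h=h_1\perp h_2\pi_D\in\Herm^{\varepsilon}(D,\sigma)$, so $\Rank(h)=\Rank(h_1)+\Rank(h_2)=n$.

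It remains to check that $h$ is anisotropic, and this is where the one delicate point appears: \cref{Lamour} is stated only over complete base fields, so it cannot be applied to $D$ directly. Instead I would base-change to $\widehat k$: the decomposition $h\otimes_k\widehat k=(h_1\otimes_k\widehat k)\perp(h_2\otimes_k\widehat k)\pi_D$ is of the form occurring in \cref{Lamour}, and its residue forms $\overline{h_i\otimes\widehat k}=\overline{h_i}$ are anisotropic, so the implication ``the residue forms are anisotropic $\Rightarrow$ the form is anisotropic'' of \cref{Lamour} --- the direction already used in the second half of the proof of \cref{Lamour2} --- shows that $h\otimes_k\widehat k$, and hence $h$, is anisotropic. (One can instead bypass \cref{Lamour} altogether: diagonalise $h'=\langle a_1',\dots,a_n'\rangle$, approximate each $a_i'$ by $a_i\in D$ with $\sigma(a_i)=\varepsilon a_i$ by the same symmetrisation, and use that over the complete algebra $\widehat D$ the isometry class of $h'$ is open --- concretely, solve $\widehat\sigma(\lambda_i)\,a_i\,\lambda_i=a_i'$ for $\lambda_i$ near $1$ by Newton iteration, using $\Char\overline k\ne2$ --- so that $h\otimes_k\widehat k\simeq h'$ is anisotropic.) The only real obstacle is bookkeeping: one must verify that the lifting step and the one-way Springer implication remain valid over the non-complete algebra $D$, which holds because both arguments use only the valuation $w$ and the density of $D$ in $\widehat D$, never the completeness of $D$ itself.
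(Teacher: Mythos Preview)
Your argument is correct but takes a genuinely different route from the paper. You pass through the residue algebra twice, via Larmour's decomposition: decompose $h'$ over $\widehat k$, lift the anisotropic residue pieces $\overline{h_1'},\overline{h_2'}$ to diagonal forms over $D$ (as in the second half of the proof of \cref{Lamour2}), reassemble, and then invoke the easy direction of \cref{Lamour} over $\widehat k$ to certify anisotropy. The paper instead works entry by entry and never touches \cref{Lamour}: given a diagonal entry $a\in\Sym^{\varepsilon}(\widehat D,\widehat\sigma)$, it picks a $k$-basis $e_1,\dots,e_r$ of $\Sym^{\varepsilon}(D,\sigma)$, approximates the $\widehat k$-coordinates of $a$ by elements of $k$ to get $b\in\Sym^{\varepsilon}(D,\sigma)$ with $\overline{ab^{-1}}=1$, observes that $s=ab^{-1}$ is fixed by $\Int(b)\circ\widehat\sigma$ so this involution is trivial on the commutative complete field $\widehat k(s)$, applies Hensel's lemma there to extract $c$ with $c^2=s$, and concludes $a=cb\,\widehat\sigma(c)$, i.e.\ $\langle a\rangle\simeq\langle b\rangle\otimes\widehat k$. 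Doing this for every diagonal entry produces $h_0$ over $(D,\sigma)$ with $h_0\otimes\widehat k\simeq h$, which is strictly stronger than what you obtain (you only get \emph{some} anisotropic form of the same rank, not a descent of $h'$ itself up to isometry). Your parenthetical alternative---solve $\widehat\sigma(\lambda_i)a_i\lambda_i=a_i'$ near $1$---is essentially the paper's argument in disguise; the paper just makes the Hensel step explicit by passing to the commutative subfield $\widehat k(s)$ and taking a square root. What your approach buys is that it recycles machinery already set up in \cref{Lamour} and \cref{Lamour2} and avoids the congruence-subfield computation; what the paper's approach buys is a cleaner self-contained statement (descent of the isometry class) and no dependence on the Larmour splitting.
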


\begin{proof}   Let $v$ be the normalized discrete valuation on $k$, $\pi \in k$ and $ v(\pi)=1 $.
Since $D \otimes \widehat{k}$ is division,  $v$ extends to a valuation $w$ on $D$ and $ w|_{k}=v $. 
  Let $\varepsilon  = \pm 1$ and 
  $\Sym^\varepsilon(D, \sigma) = \{ x \in D  \mid \sigma(x) = \varepsilon  x\}$.
 Let $e_1, \cdots , e_r$ be a $k$-basis of $\Sym^{\varepsilon}(D, \sigma) $ such that $ w(e_{i})\ge 0 $ for all $ 1\le i\le r $.  
 Let $a \in \Sym^\varepsilon(D, \sigma) \otimes \widehat{k}$ 
 and write $ a= a_1e_1 + \cdots  + a_re_r$ with $a_i \in \widehat{k}$.
 Let $b_i \in k$ be such that $a_i  \equiv b_i$ modulo $\pi^{w(b) + 1}$
 and $b = b_1e_1 + \cdots + b_re_r \in \Sym^\varepsilon(D, \sigma)$. 
We have $ w(a-b)>w(b) $. 
Since $ ab^{-1}=(a-b)b^{-1}+1 $, we have $\overline{ab^{-1}} = 1 \in \overline{D \otimes \widehat{k}}$. 

%

{Let $s=ab^{-1}\in D \otimes \widehat{k}$. Then $w(s)=0$ and $\overline{s}=1$
$$\begin{array}{lllll}
a=sb&\implies&\widehat{\sigma}(a)=\sigma(b)\widehat{\sigma}(s)
&\implies&\varepsilon a=\varepsilon b\widehat{\sigma}(s)\\
&\implies& a=b\widehat{\sigma}(s)
&\implies& sb=b\widehat{\sigma}(s)\\
&\implies& s=(\Int(b)\circ\widehat{\sigma})(s)
&\implies&(\Int(b)\circ\widehat{\sigma})|_{\widehat{k}(s)}=\Id_{\widehat{k}(s)}.\\
\end{array}
$$
Since $\widehat{k}(s)$ is complete, by hensel's lemma, there exists $c\in \widehat{k}(s)$ such that $c^2=s$ and $\overline{c}^2=\overline{s}=1\in \overline{\widehat{k}(s)}$. Then
\[
\begin{array}{lll}
(\Int(b)\circ\widehat{\sigma})(c)=c&\implies&b\widehat{\sigma}(c)=cb\\
&\implies&a=sb=ccb=cb\widehat{\sigma}(c)\\
&\implies&\langle a\rangle\simeq \langle b\rangle\otimes\widehat{k}\\
\end{array}
\]}

Let $h$ be an $\varepsilon$-hermitian forms over $(D\otimes \widehat{k}, \sigma)$.
Since $D \otimes \widehat{k}$ is division,  $h = \langle \alpha_1, \cdots , \alpha_n\rangle $ for some 
$\alpha_i \in \Sym^\varepsilon(D, \sigma) \otimes \widehat{k}$.  For each $\alpha_i$, 
let $\beta_i \in \Sym^{\varepsilon}( D,\sigma)$ be such that $\langle \alpha_i\rangle  \simeq \langle \beta_i\rangle \otimes\widehat{k}$
and $h_0 = \langle \beta_1, \cdots , \beta_n\rangle $.  Then $h_0$ is an $\varepsilon$-hermitian form over
$(D, \sigma)$ and $h_0 \otimes \widehat{k} \simeq h$. If $h$ is anisotropic over 
$\widehat{k}$, then $h_0$ is anisotropic. 
In particular,  
$u(D, \sigma, \varepsilon) \ge u(D\otimes \widehat{k}, \sigma\otimes\Id, \varepsilon)$.
\end{proof}

\section{Division algebras over $\mathscr A_i(2)$-fields}\label{sec3}

A field $k$ is called an \textit{$\mathscr A_i(m)$-field} \cite[2.1]{L} if every system of $r$ homogeneous forms of degree $m$ in more than $rm^i$ variables over $k$ has a nontrivial simutaneous zero over a field extension $L/k$ such that $\gcd(m, [L:k])=1$.

Let $ A $ be a central simple algebra over a field $ K $ of characteristic not $ 2 $. 
Let $ \sigma $ be an involution on $ A $. Let $ k $ be the subfield of $ K $ consisting of elements of $ K $ fixed by $ \sigma $. 
We say that $ (A, \sigma) $ satisfies \textit{Springer's property} if for all odd degree field extensions $ L/k $ and for all anisotropic hermitian space $ h $ over $ (A, \sigma) $, the scalar extension $ h_{L} $ remains anisotropic over $ (A_{L}, \sigma_{L}) $. 

In \cite{PSS}, Parimala, Sridharan and Suresh have shown that Springer's property holds for  hermitian or skew-hermitian spaces over quaternion algebras with involution of the first kind. In \cite{Wu}, the author has shown that Springer's property holds for hermitian or skew-hermitian spaces over central simple algebras with involution of the first kind over function fields of $p$-adic curves.

Now we prove \cref{thm1}(i).

\begin{proof} 
Let $\sigma$ be an orthogonal involution on $D$.
Let $\Sym^{\varepsilon} (D, \sigma) = \{x\in D~|~\sigma(x)=\varepsilon x\}$ and 
$r = \dim_k(\Sym^{\varepsilon}(D, \sigma))$. 
Then $r=d(d+\varepsilon)/2$ \cite[2.6]{inv}.
Let $e_1,\cdots, e_r$ be a $k$-basis of $\Sym^{\varepsilon}(D,\sigma)$. 
Let  $h$ be  an $\varepsilon$-hermitian form over $(D, \sigma)$ of rank $n>(1+\frac{\varepsilon}{d})2^{i-1}$.
Then for $x \in D^n$, we have  
 $$h(x,x)=q_1(x,x)e_1+\cdots+q_r(x,x)e_r,$$ 
with each $q_i$ a quadratic form  over $k$ in $d^2n$ variables \cite[proof of prop.~3.6]{Mah}. 
  
Since $k$ is an $\mathscr A_i(2)$-field and $d^2n>d(d+\varepsilon)2^{i-1}=r2^i$, there exists an odd degree extension $L/k$ such that $\{q_1,\cdots, q_r\}$ have a simultaneous nontrivial zero over $L$. 
Then $h_L$ is isotropic over $D_L$. 
By Springer's property, $h$ is isotropic over $D$. 
Hence $u(D,\sigma, \varepsilon)\le(1+\frac{\varepsilon}{d})2^{i-1}$. 

Similarly, if $\sigma$ is a symplectic involution on $D$, then $r=d(d-\varepsilon)/2$ and hence $u(D,\sigma, \varepsilon)\le(1-\frac{\varepsilon}{d})2^{i-1}$.
\end{proof}

{Next, we prove \cref{thm1}(ii).}

\begin{proof}
Let $\sigma$ be an involution on $D$ of the second  kind.
Let $\Sym(D) = \{ x \in D \mid \sigma(x) = x \}$. 
Then $\Sym(D)$ is vector space over $k$
and $\dim_k\Sym(D) = d^2$, where $d^2 = \dim_K(D)$.
Let $e_1,\cdots, e_{d^2}$ be a $k$-basis of $\Sym(D)$. 
Let  $h$ be a hermitian form over $(D, \sigma)$ of rank $n> 2^{i-1}$. 
Then, for $x \in D^n$, $h(x, x) \in \Sym(D)$ and
we have      
$$h(x,x)=q_1(x,x)e_1+\cdots+q_{d^2}(x,x)e_{d^2},$$ 
with  each $q_i$  a quadratic form  over $k$ in   $2d^2n$ variables. 
  
Since $k$ is an $\mathscr A_i(2)$-field and $2d^2n> 2 d^22^{i-1} = d^22^i$,  
there exists an odd degree extension $L/k$ such that $\{q_1,\cdots, q_{d^2}\}$ have a simultaneous nontrivial zero over $L$. 
In particular, $h_L$ is isotropic over $D_L$. 
By Springer's property, $h$ is isotropic over $D$. 
Hence $u^0(D/k) \le  2^{i-1}$. 
\end{proof}

\begin{cor}\label{quaternion}
If $D$ is a quaternion division algebra over an $\mathscr A_i(2)$-field $k$ with $ \Char k\ne 2 $ 
  and $\sigma$ is of the first kind, then $u^+(D)\le 3\cdot2^{i-2}$ and $u^-(D)\le 2^{i-2}$;  
\end{cor}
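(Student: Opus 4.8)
The plan is to deduce \cref{quaternion} as the special case $d=2$ of \cref{thm1}, once we know that a quaternion division algebra over \emph{any} field satisfies the Springer property. The latter is exactly the content of the result of Parimala, Sridharan and Suresh quoted just before the proof of \cref{thm1}(i): for $\varepsilon$-hermitian or skew-hermitian spaces over a quaternion algebra with involution of the first kind, anisotropy is preserved under odd-degree extensions. So the only hypothesis of \cref{thm1} that needs checking is the Springer property, and that is already in hand.

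First I would invoke \cite[2.5]{PSS} (or the reformulation in the paragraph preceding \cref{thm1}(i)) to record that $D$ satisfies the Springer property. Next, since $k$ is assumed to be an $\mathscr A_i(2)$-field and $D$ is a quaternion division algebra, all the hypotheses of \cref{thm1}(i) are met with $d=\deg(D)=2$. Applying part (i) of that theorem directly gives
\[
u^+(D)\le\Bigl(1+\tfrac12\Bigr)2^{i-1}=\tfrac32\cdot 2^{i-1}=3\cdot 2^{i-2},
\qquad
u^-(D)\le\Bigl(1-\tfrac12\Bigr)2^{i-1}=\tfrac12\cdot 2^{i-1}=2^{i-2}.
\]
That is precisely the asserted pair of inequalities, so no further argument is needed beyond this substitution.

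There is essentially no obstacle here: the corollary is a plug-in of $d=2$ into an already-proved theorem, with the Springer-property hypothesis supplied by a cited result about quaternion algebras. If one wanted a self-contained argument, the only nontrivial input would be re-proving the Springer property for quaternion algebras — but that is explicitly outsourced to \cite{PSS}, so the proof of the corollary is just the arithmetic displayed above. I would also remark that the same substitution into \cref{thm1}(ii) is not available, since a quaternion \emph{division} algebra carries no involution of the second kind (its center is $k$); the unitary case enters only after base change to a quadratic extension, which is why it appears separately in \cref{u}(2) rather than here.
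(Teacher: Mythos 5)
Your proposal is correct and matches the paper's own proof: both reduce the corollary to \cref{thm1}(i) with $d=2$, supplying the Springer property for quaternion division algebras from the Parimala--Sridharan--Suresh result (the paper cites it as \cite[3.5]{PSS}), and the arithmetic $(1\pm\tfrac12)2^{i-1}$ gives exactly the stated bounds.
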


\begin{proof}
 Since, by  \cite[3.5]{PSS}, $(D, \sigma,\varepsilon)$ satisfies Springer's property, the corollary follows
 from \cref{thm1}(i).
  \end{proof}
  
\begin{cor}\label{global}
If $D$ is a quaternion division algebra  over   a global function field $k$, then $u^+(D)=3$, $u^-(D)=1$,
and $u^0(D/k) = 2$. 
\end{cor}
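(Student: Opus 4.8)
The plan is to deduce everything from the local-to-global structure of quaternion algebras over a global function field together with the preparatory results already in place, especially Corollaries \ref{Lamour3} and \ref{Lamour4} and Lemma \ref{dvr}. First, recall that a global function field $k$ has characteristic $\ne 2$ by hypothesis (indeed $\Char \overline{k} \ne 2$ is assumed throughout, and here $p$ is odd), and that $u(k) = 4$: this is classical (the quadratic form $u$-invariant of a global function field of odd characteristic). By Tsen's theorem applied to the function field of a curve over a finite field, every quaternion algebra $D$ over $k$ is ramified at an even, nonempty set of places, and at each ramified place $v$ the completion $D_v = D \otimes_k k_v$ is the unique quaternion division algebra over the local field $k_v$; at unramified places $D_v$ is split.

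For the upper bounds, I would run the $\mathscr{A}_i(2)$-field argument: a global function field $k$ is an $\mathscr{A}_3(2)$-field (it is a $C_3$-type field for the relevant simultaneous-zero statement; this is the analogue over function fields of the statement used for number fields). Actually the cleanest route is: $k$ is an $\mathscr{A}_3(2)$-field, so \cref{quaternion} with $i = 3$ gives $u^+(D) \le 3 \cdot 2^{1} = 6$ and $u^-(D) \le 2^{1} = 2$ — but these are not sharp enough. Instead I would use that $u(k) = 4$ forces the base case to descend by one: more precisely, pick a place $v$ where $D$ ramifies, pass to the completion $k_v$, and invoke \cref{dvr} to get $u(D, \sigma, \varepsilon) \ge u(D_v, \widehat{\sigma}, \varepsilon)$; by \cref{local}(1), $u^+(D_v) = 3$ and $u^-(D_v) = 1$, giving the lower bounds $u^+(D) \ge 3$, $u^-(D) \ge 1$. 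For the matching upper bounds, I would instead note that $k$ being an $\mathscr{A}_2(2)$-field for diagonal quadratic forms is false, so the honest argument is via the Hasse principle: an anisotropic $\varepsilon$-hermitian form over $(D,\sigma)$ remains anisotropic over some completion $k_v$, and over $k_v$ the bounds of \cref{local}(1) apply place-by-place, yielding $u^+(D) \le 3$, $u^-(D) \le 1$. Combining, $u^+(D) = 3$ and $u^-(D) = 1$.

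For $u^0(D)$: here $\sigma$ is of the second kind, so its fixed field is a field $k_0$ with $k/k_0$ quadratic, and $D$ is a quaternion algebra over the global function field $k$; again by Lemma \ref{dvr} (applied at a place $v_0$ of $k_0$ that does not split in $k$, or more directly by the Hasse principle for hermitian forms over $(D,\sigma)$) the value $u^0(D)$ is controlled by the local invariants $u^0(D_{v}) = 2$ from \cref{local}(2), giving $u^0(D) \le 2$; and choosing $v_0$ so that $D$ ramifies over the completion gives $u^0(D) \ge 2$. Hence $u^0(D) = 2$.

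The main obstacle I anticipate is supplying the global ingredient that plays the role of \cref{local} globally — namely a Hasse principle (or the precise $\mathscr{A}_i(2)$-property with the sharp $i$) guaranteeing that an anisotropic hermitian form over $(D,\sigma)$ stays anisotropic over some completion, so that the exact local values $3,1,2$ propagate to equalities globally. The lower bounds are routine via \cref{dvr}; the upper bounds require knowing that $D$ over a global function field satisfies the appropriate local-global principle for isotropy of $\varepsilon$-hermitian forms, which for quaternion algebras follows from the classical Hasse–Minkowski-type theorem over global fields (of odd characteristic) combined with Morita invariance, \cref{Morita}. Once that is granted, the corollary is immediate from \cref{local}.
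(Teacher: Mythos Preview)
Your lower-bound argument via \cref{dvr} and \cref{local} is correct and matches the paper exactly. The problem is with the upper bounds: you assert that a global function field is an $\mathscr{A}_3(2)$-field and then explicitly claim that ``$k$ being an $\mathscr{A}_2(2)$-field \ldots\ is false.'' This is off by one. A finite field is $C_1$ (Chevalley--Warning), and a function field in one variable over a $C_1$-field is $C_2$ (Tsen--Lang); hence a global function field is $C_2$, and in particular an $\mathscr{A}_2(2)$-field. Plugging $i=2$ (not $i=3$) into \cref{thm1} or \cref{quaternion} gives $u^+(D)\le (1+\tfrac12)\cdot 2=3$, $u^-(D)\le (1-\tfrac12)\cdot 2=1$, and $u^0(D)\le 2^{1}=2$ directly, with no further input needed. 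This is precisely the paper's proof.

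Your alternative route via a Hasse principle for $\varepsilon$-hermitian forms over $(D,\sigma)$ would also yield the upper bounds --- such local-global principles are indeed available over global fields --- but it is considerably heavier machinery, and you reach for it only because of the mistaken $C_3$ indexing. You may be conflating the global-function-field case with the semi-global case treated elsewhere in the paper, where $F$ is the function field of a $p$-adic curve and one genuinely has $i=3$.
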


\begin{proof}
	By Chevalley-Warning theorem \cite{Chevalley, Warning}, every finite field is a $C_1$-field. 
	By Tsen-Lang theorem \cite{Lang}, every global function field is a $C_2$-field.
	Since every $C_2$-field is an $\mathscr A_2(2)$-field \cite[between 2.1 and 2.2]{L} and Springer's property is true over global fields by \cite[Lem.~5.7]{Wu}, by \cref{thm1}(i), 
	 $u^+(D) \le 3$ and $u^-(D) \le 1$ and  by \cref{thm1}(ii), $u^0(D) \le 2$.
	 The equality follows from \cref{dvr} and \cref{local}.
	\end{proof}

\begin{cor}\label{p-adic_varieties}
Let $F$ the function field of an integral variety $X$ over a $p$-adic field with $p\ne 2$.
Let $D$ be a quaternion algebra over $F$. 
If $\dim(X)=n$,
then $u^+(D) \le 3\cdot 2^n$ and $u^-(D) \le 2^{n}$.
\end{cor}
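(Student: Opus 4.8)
The plan is to reduce the statement over the function field $F$ of an $n$-dimensional $p$-adic variety $X$ to the case of a function field of a $p$-adic curve by an induction on $\dim(X)$, using the bound on the $u$-invariant and Springer's property that are already in place. The base case $n=1$ is exactly the curve case; more precisely, a function field of a $p$-adic curve is known to be a $C_3$-field (equivalently an $\mathscr A_3(2)$-field, since every $C_i$-field is an $\mathscr A_i(2)$-field by the remark between 2.1 and 2.2 of \cite{L}), so \cref{quaternion} with $i=3$ gives $u^+(D)\le 3\cdot 2^{3-2}=3\cdot 2 = 6$ and $u^-(D)\le 2^{3-2}=2$, matching the claimed bounds $3\cdot 2^1$ and $2^1$ when $n=1$.

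For the inductive step, I would observe that if $X$ has dimension $n$ over a $p$-adic field, then $F$ is the function field of a curve over the field $F'$, where $F'$ is the function field of an $(n-1)$-dimensional $p$-adic variety; equivalently one fibres $X$ over a variety of one lower dimension. By Tsen--Lang (\cite{Lang}), passing from a field to the function field of a curve over it raises the $C_i$-property by one, so if $F'$ is $C_{n+1}$ then $F$ is $C_{n+2}$, hence $\mathscr A_{n+2}(2)$. Then \cref{quaternion} with $i = n+2$ yields $u^+(D)\le 3\cdot 2^{n+2-2} = 3\cdot 2^n$ and $u^-(D)\le 2^{n+2-2} = 2^n$, as required. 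The point is simply that the function field of an $n$-dimensional variety over a $p$-adic field is a $C_{n+2}$-field (this is the standard consequence of Lang's theorem, using that a $p$-adic field is $C_2$), so one does not even need an explicit induction: one can cite directly that $F$ is an $\mathscr A_{n+2}(2)$-field and feed this into \cref{quaternion}.

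The main obstacle — really the only subtlety — is to make sure the cited chain "$p$-adic field is $C_2$ $\Rightarrow$ function field of $n$-dimensional variety over it is $C_{n+2}$ $\Rightarrow$ it is $\mathscr A_{n+2}(2)$" is invoked with the right numerics, so that the exponent that appears in \cref{thm1}(i) / \cref{quaternion} is $i = n+2$ and the resulting $2^{i-2} = 2^n$. One must also check that $D$ being quaternion over $F$ means $D$ is either division or split; if split, $u^+$ and $u^-$ are just the orthogonal $u$-invariant bounds for quadratic forms over $F$, which are dominated by the stated quantities, and if division, \cref{quaternion} applies verbatim. Since \cref{quaternion} already packages Springer's property via \cite[3.5]{PSS}, no further input is needed, and the proof is a two-line application.

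\begin{proof}
A $p$-adic field is a $C_2$-field, so by the Tsen--Lang theorem \cite{Lang} the function field $F$ of an $n$-dimensional integral variety over it is a $C_{n+2}$-field, hence an $\mathscr A_{n+2}(2)$-field \cite[between 2.1 and 2.2]{L}. If $D$ is split, the bounds are immediate from the corresponding bounds for quadratic forms over $F$. If $D$ is a quaternion division algebra, then by \cref{quaternion} with $i = n+2$ we get $u^+(D)\le 3\cdot 2^{(n+2)-2} = 3\cdot 2^n$ and $u^-(D)\le 2^{(n+2)-2} = 2^n$.
\end{proof}
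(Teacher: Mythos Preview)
Your overall architecture matches the paper's: establish that $F$ is an $\mathscr A_{n+2}(2)$-field and then invoke \cref{quaternion}. The gap is in how you get there. A $p$-adic field is \emph{not} a $C_2$-field --- this is precisely Artin's conjecture, disproved by Terjanian and others --- so you cannot feed a $p$-adic base field into the Tsen--Lang machine and conclude that $F$ is $C_{n+2}$. Your base-case assertion that a function field of a $p$-adic curve is a $C_3$-field is likewise not known.

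What \emph{is} true, and what the paper uses, is Heath-Brown's theorem \cite{Heath-Brown}: any system of $r$ quadratic forms over a $p$-adic field in more than $4r$ variables has a common nontrivial zero. This says $p$-adic fields satisfy the $C_2$-type bound \emph{for systems of quadratic forms only}, i.e.\ they are $\mathscr A_2(2)$-fields. Leep \cite{L} then proves that the $\mathscr A_i(2)$ property climbs along function-field extensions just as the $C_i$ property does under Tsen--Lang, giving $\mathscr A_{n+2}(2)$ for $F$. With that corrected input, your final sentence applying \cref{quaternion} with $i=n+2$ is exactly the paper's argument.
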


\begin{proof}
Since $D$ is a quaternion algebra, by \cite[3.5]{PSS}, $D$ satisfies 
the Springer's property. 
Since $\dim(X) = n$, 
by \cite{Heath-Brown} and \cite{L}, $F$ is a $\mathscr A_{n+2}(2)$-field.
Hence the corollary follows from \cref{quaternion}.
\end{proof}

\begin{cor}\label{semiglobal}
Let $F$ be a the function field of a $p$-adic curve with $ p\ne 2 $. 
Let $D$ be a division algebra over $F$ with an involution of the first kind. 

\noindent
(1) If $D$ is a quaternion division algebra, then $u^+(D)\le 6$ and $u^-(D)\le 2$. 

\noindent
(2) If $D$ is a biquaternion division algebra, then $u^+(D)\le 5$ and $u^-(D)\le 3$. 
\end{cor}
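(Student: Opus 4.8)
The plan is to deduce both statements directly from \cref{thm1}(i) (equivalently from \cref{quaternion}) once the relevant numerical invariant of $F$ is pinned down and Springer's property is checked for the algebra in question. The first thing to record is that, since $F$ is the function field of a smooth projective geometrically integral curve over a $p$-adic field (and $p\ne 2$), the field $F$ is an $\mathscr A_3(2)$-field: this is the $n=1$ instance of the input already used in \cref{p-adic_varieties}, namely the Heath-Brown/Leep result \cite{Heath-Brown,L} that the function field of an $n$-dimensional variety over a $p$-adic field is an $\mathscr A_{n+2}(2)$-field. So throughout we apply \cref{thm1}(i) with $i=3$.

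For part (1), $D$ is a quaternion division algebra, so $d=\deg(D)=2$ and, by \cite[3.5]{PSS}, $D$ satisfies Springer's property; thus \cref{quaternion} applies with $i=3$ and gives $u^+(D)\le 3\cdot 2^{3-2}=6$ and $u^-(D)\le 2^{3-2}=2$. (Equivalently, this is simply the $n=1$ case of \cref{p-adic_varieties}.) For part (2), $D$ is a biquaternion division algebra, so $d=\deg(D)=4$; here Springer's property is supplied by \cite{Wu}, which covers all central simple algebras with involution of the first kind over the function field of a $p$-adic curve. Hence \cref{thm1}(i) applies with $i=3$ and $d=4$, giving $u^+(D)\le \bigl(1+\tfrac14\bigr)2^{3-1}=\tfrac54\cdot 4=5$ and $u^-(D)\le\bigl(1-\tfrac14\bigr)2^{3-1}=\tfrac34\cdot 4=3$.

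The only point needing any care is the bookkeeping: confirming the value $i=3$ for $F$, and verifying that the algebra at hand is covered by one of the available Springer's-property results — \cite{PSS} in the quaternion case and \cite{Wu} in the biquaternion (indeed arbitrary period-two) case. There is no substantive obstacle; once those two facts are in place the stated inequalities are immediate arithmetic consequences of \cref{thm1}(i).
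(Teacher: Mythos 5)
Your proposal is correct and follows essentially the same route as the paper: $F$ is an $\mathscr A_3(2)$-field, Springer's property comes from \cite{PSS} in the quaternion case and \cite{Wu} in the biquaternion case, and the bounds then fall out of \cref{thm1}(i) (the paper merely adds a citation to Saltman to note that $\deg(D)\in\{2,4\}$ are the only possibilities, which is not needed for the statement as split into its two parts). The arithmetic checks out in both cases.
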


\begin{proof} By \cite[3.4]{Sal1, Sal1.5}, $\deg(D)=d=2$ or $4$. 
If $d = 2$, then $D$ is a quaternion algebra and result follows from 
\cref{p-adic_varieties}.
Suppose $ d = 4$.
By \cite[1.5]{Wu}, $D $ satisfies Springer's property.
Since $F$ is a $\mathscr A_{3}(2)$-field, the result follows from  \cref{thm1}(i).
\end{proof}

\begin{cor}\label{semiglobal2}
Let $F$ the function field of a $p$-adic curve with $ p\ne 2 $. 
Let $L/F$ be a quadratic extension.
Let  $D$  a quaternion division algebra over $F$ with an involution of the first kind.
Then $u^0(D\otimes_F L/F) \le 4$. 
\end{cor}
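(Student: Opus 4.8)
The plan is to derive this from \cref{thm1}(ii) after a Morita reduction. Write $\iota$ for the nontrivial $F$-automorphism of $L$, so that $\sigma\otimes\iota$ is an involution of the second kind on $A:=D\otimes_F L$ restricting to $\iota$ on the center $Z(A)=L$; in particular $Z(A)^{\sigma\otimes\iota}=F$. Writing $A\simeq M_m(D')$ with $D'$ a central division algebra over $L$, \cref{Morita} gives $u^0(A)=u^0(D')$, and the involution $\tau$ of the second kind induced on $D'$ through this Morita equivalence still restricts to $\iota$ on $L$, so its fixed field is again $F$. Since $D'$ is Brauer equivalent to $A$, $\Per(D')=\Per(A)$ divides $\Per(D)$, which divides $2$ because $D$ carries an involution of the first kind; likewise $\deg(D')$ divides $\deg(D)\in\{2,4\}$, so $\deg(D')\in\{1,2,4\}$. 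Finally, being a finite extension of $F$, the field $L$ is again the function field of a $p$-adic curve.

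Two facts then conclude the argument. First, $F$ is an $\mathscr A_3(2)$-field, since it is the function field of a curve over a $p$-adic field, by \cite{Heath-Brown} and \cite{L}. Second, $(D',\tau)$ satisfies the Springer's property for hermitian forms with respect to odd-degree extensions of $L=Z(D')$, by \cite{Wu}. Granting these, \cref{thm1}(ii) applied to $D'$ with $i=3$ gives $u^0(D')\le 2^{3-1}=4$, and hence $u^0(D\otimes_F L)=u^0(D')\le 4$.

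The step that needs the most care is the Springer's property input: the version recorded in \cref{sec3} is phrased for involutions of the first kind, whereas $\tau$ is of the second kind, so one must invoke the second-kind version of that property — either directly from \cite{Wu}, or by transferring an anisotropic hermitian space over $(D',\tau)$ down to $F$ along the quadratic extension $L/F$ and appealing to the classical Springer theorem for quadratic forms. The case $\deg(D')=1$, i.e.\ $D\otimes_F L$ split, already exhibits the mechanism transparently: a hermitian form $\langle a_1,\dots,a_n\rangle$ over $(L,\iota)$ with $L=F(\sqrt{\lambda})$ is isotropic over $L$ if and only if the $2n$-dimensional quadratic form $\langle a_1,\dots,a_n\rangle\perp\langle -\lambda a_1,\dots,-\lambda a_n\rangle$ over $F$ is isotropic, and since $u(F)=8$ this is forced as soon as $n>4$, giving $u^0(L)\le 4$.
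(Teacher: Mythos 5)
Your argument is correct and follows essentially the same route as the paper, whose proof is the one-line observation that $F$ is an $\mathscr A_3(2)$-field and that Springer's property holds by \cite[1.5]{Wu}, so that \cref{thm1}(ii) gives the bound $2^{3-1}=4$. The extra care you take — the Morita reduction to a division algebra when $D\otimes_F L$ is not division, and the observation that the Springer's property must be invoked for the \emph{unitary} involution on $D\otimes_F L$ rather than in the first-kind form recorded at the start of \cref{sec3} — addresses points the paper leaves implicit, and your fallback via the trace (quadratic) form over $F$ in the split case is a valid consistency check.
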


\begin{proof}  
Suppose $ L=F(\sqrt{\lambda}) $ is a quadratic field extension of $ F $. Let $ \Omega $ be the set of divisorial discrete valuations on $ F $. 
Let $ h $ be a hermitian form of rank $ n>4 $ over  $ D \otimes_F L $ with an $ L/F $-unitary involution $ \sigma $. 
We first show that $ h_{F_{v}} $ is isotropic for all $ v\in \Omega $, where $ F_{v} $ is the completion of $ F $ at $ v $.  

Case 1: Suppose $\lambda$ is a square in $F_v$. 
By \cite[ch.10, 6.3]{Sch}, $ L\otimes_FF_{v}\simeq F_{v}\times F_{v} $, the $ (L\otimes_FF_{v})/F_{v} $ unitary involution $ (\sigma|_L)\otimes_{F}\Id_{F_{v}} $ exchanges the two factors, $D\otimes_F F_v\simeq \Delta\times \Delta^{\op}$ for some central simple algebra $ \Delta $ over $ F_{v} $, and $h_{F_v}$ is hyperbolic. Then $ h_{F_{v}} $ is isotropic. 

Case 2: Suppose $\lambda$ is not a square in $F_v$. 
Then $ L\otimes_{F} F_{v} $ is a field with the nontrivial $  (L\otimes_{F} F_{v})/F_{v} $-automorphism $ (\sigma|_L)\otimes_{F}\Id_{F_{v}} $. Since $ D $ is a quaternion algebra over $ F $, $ D\otimes_FF_{v}$ is a quaternion algebra over $ L\otimes_{F} F_{v} $. So $ D\otimes_FF_{v}$ is either division or split. 

Subcase 2a: Suppose $\lambda$ is not a square in $F_v$ and $ D\otimes_FF_{v} $ is a central division algebra over the field $ L\otimes_{F} F_{v} $. 
Then $ h_{F_{v}} $ is isotropic since $ u^{0}(D\otimes_FF_{v}/F_{v})=4 $ \cite[Lem. 4.1(3)]{Wu1}. 

Subcase 2b: Suppose $\lambda$ is not a square in $F_v$ and $ D\otimes_FF_{v} $ is split over the field $ L\otimes_{F} F_{v} $. Then $ h_{F_{v}} $ is Morita equivalent to a hermitian form $ h_{v} $ of rank $ n $ over $ L\otimes_{F} F_{v} $.  
Then $ h_{F_{v}} $ is isotropic iff $ h_{v} $ is isotropic \cite[Ch.I, 9.3.5]{Knus}. 
Also, $ h_{v} $ is isotropic over $  L\otimes_{F} F_{v}  $ iff its trace form $ q_{v} $ is isotropic over $ F_{v} $ by \cite[Ch.10, 1.1(ii)]{Sch}. 
Since the residue field $ \overline{F_{v}} $ is either a $ p $-adic field or a global function field, we have $ u(\overline{F_{v}})=4 $. 
By another theorem of Springer \cite[Ch.6, Cor.2.5]{Sch},  $ u(F_{v})=8 $. 
Since $ q_{v} $ has rank $ 2n>8$, it is isotropic and hence $h_{v}, h_{F_{v}} $ are isotropic. 

Finally, we have the local-global principle for hermitian forms over $ F $ given by \cite[Th.~4.5]{Wu}, so $ h  $ is isotropic. Therefore $ u^0(D \otimes_F L/F) \leq 4 $. 
\end{proof}

\section{Division algebras over semi-global fields}\label{sec5}
%
%
%
Let $p$ be an odd prime number. 
Let $F$ be the function field of a curve over a $p$-adic field. 
Let $D$ is a division algebra over $F$ with an involution $\sigma$. 
In this section, we show that  the bounds in \cref{semiglobal}  for
the u-invariants of hermitian of forms over central simple algebras over $F$ are 
  in fact exact values.  We also compute $u^0$ if $D$ is a quaternion division algebra 
  with an involution of the second kind over $F$.
  
\begin{lem}\label{key2}	
	 Let $k$ be a  complete discrete 
	 valued field with residue field $\overline{k}$.
	 Suppose $\overline{k}$ is a non-archimedean local field or a global function field with  $\Char(\overline{k}) \ne 2$.  Let $D$ be a division algebra over $k$ with an involution of the first kind and $K/k$ a quadratic extension.  	  
	  
\noindent
 (1) If   $D$ is a quaternion division algebra, 
  then $u^+(D)= 6$ and  $u^-(D)= 2$.

\noindent
(2) If $D$ is a biquaternion algebra, then $u^+(D )= 5$ and $u^-(D )= 3$.

\noindent
(3)  If $D \otimes_k K$ is a division algebra, then $u^0(D\otimes_k K/k) = 4$. 
\end{lem}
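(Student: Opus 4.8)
The plan is to reduce each case to the residue level via the Springer-type corollaries proved in Section~\ref{pre} (Corollaries~\ref{Lamour3} and~\ref{Lamour4}), where the residue field $\overline{k}$ is either a non-archimedean local field or a global function field, and then invoke Lemma~\ref{local} (local case) and Corollary~\ref{global} (global function field case) to evaluate the residue u-invariants. For part~(1), with $D$ a quaternion division algebra over $k$: if $D$ is unramified, then $\overline{D}$ is a quaternion division algebra over $\overline{k}$ (since $D$ is still division after reduction), so Corollary~\ref{Lamour3}(1) gives $u^{+}(D)=2u^{+}(\overline{D})$ and $u^{-}(D)=2u^{-}(\overline{D})$; by Lemma~\ref{local}(1) or Corollary~\ref{global} these residue values are $3$ and $1$, yielding $6$ and $2$. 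If $D$ is ramified, then $\overline{D}$ has center $\overline{k}(\sqrt{\overline{u}})$ a quadratic extension and $\overline{D}$ is either split or a quaternion algebra over that quadratic extension; Corollary~\ref{Lamour3}(2) gives $u^{+}(D)=u^{0}(\overline{D})+u^{+}(\overline{D})$ and $u^{-}(D)=u^{0}(\overline{D})+u^{-}(\overline{D})$. When $\overline{D}$ is split over its center, $u^{0}(\overline{D})=u^{0}$ of a field/quadratic-extension pair and $u^{+}(\overline{D})$, $u^{-}(\overline{D})$ are u-invariants of quadratic forms over the quadratic extension; when $\overline{D}$ is a quaternion division algebra over the quadratic extension one uses Lemma~\ref{local}(2) and (1) again. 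The arithmetic has to land on $6$ and $2$ in every subcase, which requires tracking whether the residue quadratic extension is itself local or a global function field and using the known u-invariant values there.

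For part~(2), with $D$ biquaternion of degree $4$: the index can drop after reduction only in controlled ways. If $D$ is unramified, $\overline{D}$ is a degree-$4$ division algebra (or of smaller index) over $\overline{k}$; over a local or global function field every central division algebra has index dividing $2$ (by Lemma~\ref{local}, since such algebras over a local field have $u^{\pm}$ finite only for index $\le 2$ — more precisely, the Brauer group of a local field is $\mathbb{Q}/\mathbb{Z}$ but a biquaternion algebra can still be division over a local field only if... here I would instead note that over a $p$-adic local field or a global function field a biquaternion algebra is never division, so $\overline{D}$ is a quaternion division algebra), giving $u^{+}(D)=2\cdot 3=6$? — this contradicts the claimed $5$, so in fact in the biquaternion case $D$ must be \emph{ramified}, and Corollary~\ref{Lamour3}(2) applies with $\overline{D}$ of index $2$ over its quadratic center, so $u^{0}(\overline{D})=2$, $u^{+}(\overline{D})=3$, $u^{-}(\overline{D})=1$ (Lemma~\ref{local}(2),(1)), giving $u^{+}(D)=2+3=5$ and $u^{-}(D)=2+1=3$. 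The key point to verify carefully is that a biquaternion division algebra over such a $k$ is automatically ramified (equivalently, $\overline{D}$ cannot itself be biquaternion), which follows from the fact that the residue field has no biquaternion division algebras.

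For part~(3), with $L/k$ quadratic and $D\otimes_k L$ division of degree $2$ (so $D$ is quaternion): I would use Remark~\ref{rem2.7} to split into the three ramification cases and apply the corresponding parts of Corollary~\ref{Lamour4}. In case~(1) ($L/k$ and $D\otimes L/L$ both unramified), $u^{0}(D\otimes L)=2u^{0}(\overline{D}\otimes\overline{L})=2\cdot 2=4$ by Lemma~\ref{local}(2) (or Corollary~\ref{global}). In case~(2) ($D\otimes L/L$ ramified), $u^{0}(D\otimes L)=u^{0}(\overline{D}\otimes\overline{L}/\overline{k}_1)+u^{0}(\overline{D}\otimes\overline{L}/\overline{k}_3)$; each residue term is a unitary u-invariant of a quaternion algebra (or split algebra) over a local or global function field, each equal to $2$, giving $4$. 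In case~(3) ($L/k$ ramified), $u^{0}(D\otimes L)=u^{+}(\overline{D})+u^{-}(\overline{D})=3+1=4$ by Lemma~\ref{local}(1). In all cases the answer is $4$. The main obstacle throughout is bookkeeping: ensuring that the residue algebras and residue quadratic extensions are correctly identified (using Remarks~\ref{rem2.7}, \ref{rem2.8} and Lemma~\ref{choice_of_pid}), that one correctly distinguishes when $\overline{D}$ is division versus split, and that the residue field type (local versus global function field) is propagated so that Lemma~\ref{local} and Corollary~\ref{global} can be applied with the right numerical values; no single step is deep, but the case analysis must be exhaustive.
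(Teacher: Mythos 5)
Your overall strategy is exactly the paper's: reduce to the residue level via Corollary~\ref{Lamour3} and Corollary~\ref{Lamour4}, then evaluate the residue u-invariants with Lemma~\ref{local} and Corollary~\ref{global}. Parts (2) and (3) are essentially the paper's argument; for (2) the paper justifies ``biquaternion division $\Rightarrow$ ramified'' by citing Albert's theorem together with Springer's theorem, whereas you argue from the non-existence of degree-$4$ central division algebras of period $2$ over the residue field --- both routes are valid.

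The genuine gap is in the ramified case of part (1). Since $D$ is quaternion and ramified, the fundamental equality $ef=[D:k]=4$ with $e=2$ forces $[\overline{D}:\overline{k}]=2$, so $\overline{D}$ \emph{is} the quadratic field $\overline{k}(\overline{\sqrt{u}})$; the alternative you leave open (``a quaternion algebra over that quadratic extension'') cannot occur. More seriously, you assert that both $u^{+}(\overline{D})$ and $u^{-}(\overline{D})$ are ``u-invariants of quadratic forms over the quadratic extension'' and then defer the arithmetic. Taken literally this yields $u^{-}(D)=u^{0}(\overline{D})+u^{-}(\overline{D})=2+4=6$, not $2$. The missing point is that by Lemma~\ref{choice_of_pid}(c) the involution $\overline{\Int(\pi_D)\circ\sigma}$ is of the first kind, i.e.\ the identity on the field $\overline{D}$, for which $u^{+}(\overline{D})=u(\overline{D})=4$ but $u^{-}(\overline{D})=0$: skew-symmetric bilinear forms over a field of characteristic $\ne 2$ are alternating, hence hyperbolic. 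Combined with $u^{0}(\overline{D})=\frac{1}{2}u(\overline{k})=2$ for the second-kind involution $\overline{\sigma}$, this gives $u^{+}(D)=2+4=6$ and $u^{-}(D)=2+0=2$, which is the computation the paper actually performs.
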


\begin{proof}
(1) Suppose $D$ is an unramified quaternion algebra.  Then $\overline{D}$ is a quaternion algebra.
Since $\overline{k}$ is either a local field or a global function field, by \cref{local} and \cref{global}, 
we have $u^+(\overline{D} )=3$, $u^-(\overline{D})=1$ and $u^-(\overline{D})=2$. 
Thus,  by  \cref{Lamour3}(1),  $u^+(D )= 2*3=6$ and $u^-(D )= 2*1=2$.
 
Suppose $D$ is a ramified quaternion algebra. Then $\overline{D}$ is a quadratic extension of 
$\overline{k}$ and  by \cref{local} and \cref{Lamour3}(2), $u^+(D )= 2+4=6$ and $u^-(D )=2+0=2$.

(2) Suppose $D$ is a biquaternion algebra. 
Since $k$ is a complete discrete valued field
with $\overline{k}$ is a global field or local field, $D$ is ramified by a theorem of Albert \cite[III, 4.8]{Lam} and a theorem of Springer \cite[VI, 1.9]{Lam}. 
Thus $\overline{D}$ is a quaternion algebra and hence by \cref{local} and \cref{Lamour3}(2), $u^+(D)= 2+3=5$ and $u^-(D)=2+1=3$. 

(3) Suppose $D \otimes_k K$ is a division algebra.  
Then, by \cref{Lamour4}, we have 
either $u^0(D \otimes_k K/k) =  2u^0(\overline{D \otimes_k K}/\overline{k})$ or  $u^0(D \otimes_k K/k)=  u^0(\overline{D} \otimes_{\overline{k}} \overline{K}/\overline{k}_1)  
+ u^0(\overline{D} \otimes_{\overline{k}} \overline{K}/\overline{k}_3)$ or 
$u^0(D \otimes_k K/k) = u^+(\overline{D_0})+u^-(\overline{D_0})$
for some central division algebra $D_0$ unramified over $k$ with $\deg(D) = \deg(D_0) $.

In the case of \cref{Lamour4}(1), $u^0(D \otimes_k K/k) =  2u^0(\overline{D \otimes_k K}/\overline{k})= 2*2=4$; 

In the case of \cref{Lamour4}(2), $u^0(D \otimes_k K/k)=  u^0(\overline{D} \otimes_{\overline{k}} \overline{K}/\overline{k}_1)  
+ u^0(\overline{D} \otimes_{\overline{k}} \overline{K}/\overline{k}_3)$. 
Since $\overline{k}$ is a $p$-adic field or a global field, so are $\overline{k_1}$ and $\overline{k_3}$. 
We have $u(\overline{k_1})=u(\overline{k_3})=4$. 
Since $\overline{D} \otimes_{\overline{k}} \overline{K}$ is a quadratic extension of $\overline{k_1}$, we have 
$u^0(\overline{D} \otimes_{\overline{k}} \overline{K}/\overline{k_1}) = \frac{1}{2}u(\overline{k_1}) = 2$. 
Similarly, $u^0(\overline{D} \otimes_{\overline{k}} \overline{K}/\overline{k_3}) = \frac{1}{2}u(\overline{k_3}) = 2$. 
Thus, we also have $u^0(D\otimes_k K) = 2+2=4$. 

In the case of \cref{Lamour4}(3), $u^0(D \otimes_k K/k) =  u^+(\overline{D_0})+u^-(\overline{D_0})= 3+1=4$.
\end{proof}

%
%
%
%
{Now we prove our main result \cref{u}.}
	 
\begin{proof}
 
Since  $D$ is a division algebra. 
By \cite[2.6]{RS}, there exists a divisorial discrete valuation 
$v$ of $F$ such that $D \otimes F_v$ is division. Since $v$ is a divisorial discrete valuation, 
the residue field at $v$ is either a $p$-adic field or a global function field.

(1) and (3) follow from 
  \cref{semiglobal}, \cref{key2}(1)(2) and \cref{dvr}.  
 
(2) By \cite[2.6]{RS}, there exists a divisorial discrete valuation $v$ of
 $F$ such that $D \otimes L \otimes F_v$ is division. 
 Thus, the result follows from 
 \cref{semiglobal2}, \cref{key2}(3) and \cref{dvr}. 
\end{proof}

\section{Tensor product of quaternions over arbitrary fields}\label{app2}
In this section, we revisit and prove \cref{thm2}.  We begin with the following

\begin{lem}\label{5.1}
For $n \ge 1$, let 
$a_n=\frac{4}{5}+\frac{1}{5}(\frac{9}{4})^{n}$, $b_n=-\frac{1}{5}+\frac{1}{5}(\frac{9}{4})^{n}$ and $c_n=\frac{1}{5}+\frac{3}{10}(\frac{9}{4})^{n}$. Then  
\[ a_{n+1}=\frac{3}{4}a_n+c_n, ~b_{n+1}=\frac{3}{2}b_n+\frac{1}{2}c_n,~ c_n=\frac{1}{2}a_n+b_n,~\frac{3}{2}a_n\ge c_n\ge \frac{3}{2}b_n \]
for all $n\ge 1$.

\end{lem}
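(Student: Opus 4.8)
The plan is to verify each of the four assertions by direct substitution of the closed forms, using nothing beyond the elementary identity $(\tfrac94)^{n+1} = \tfrac94\,(\tfrac94)^n$; there is no conceptual content here, only bookkeeping.

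I would first dispose of the linear relation $c_n = \tfrac12 a_n + b_n$: expanding the right-hand side gives $\tfrac12\bigl(\tfrac45 + \tfrac15(\tfrac94)^n\bigr) + \bigl(-\tfrac15 + \tfrac15(\tfrac94)^n\bigr)$, and collecting the constant terms and the $(\tfrac94)^n$-terms separately yields $\tfrac15 + \tfrac{3}{10}(\tfrac94)^n$, which is exactly $c_n$. For the two recurrences I would compute the right-hand sides directly: $\tfrac34 a_n + c_n = \tfrac45 + \tfrac{9}{20}(\tfrac94)^n$, while $a_{n+1} = \tfrac45 + \tfrac15(\tfrac94)^{n+1} = \tfrac45 + \tfrac{9}{20}(\tfrac94)^n$, so the two agree; likewise $\tfrac32 b_n + \tfrac12 c_n = -\tfrac15 + \tfrac{9}{20}(\tfrac94)^n = b_{n+1}$.

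For the chain of inequalities, the cleanest route is to notice that the exponential parts cancel exactly: a one-line computation gives $\tfrac32 a_n - c_n = 1$ and $c_n - \tfrac32 b_n = \tfrac12$, identically in $n$. Both constants are positive, so $\tfrac32 a_n \ge c_n \ge \tfrac32 b_n$ for every $n \ge 1$ (in fact for all $n$). Alternatively, one may observe that each of $a_n, b_n, c_n$ has the shape $\alpha + \beta(\tfrac94)^n$, so every claimed identity reduces to the two scalar identities obtained by matching the constant term and the coefficient of $(\tfrac94)^n$ on the two sides, and the inequalities reduce similarly to checking the signs of the resulting constants. The only ``obstacle'' is keeping the fractions straight, so I would simply present the four short computations and stop.
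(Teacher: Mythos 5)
Your proposal is correct, and it takes the same route as the paper, whose proof simply states that the identities follow from the definitions of $a_n$, $b_n$, $c_n$; you have merely written out the elementary verifications (all of which check out, including the clean observations $\tfrac32 a_n - c_n = 1$ and $c_n - \tfrac32 b_n = \tfrac12$).
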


\begin{proof}
It follows from definitions of $a_n$, $b_n$ and $c_n$ above. 
\end{proof}
%

{Now we prove \cref{thm2}.}

\begin{proof}
By \cref{Morita}, we may assume that $A=H_1 \otimes \cdots \otimes H_n$. 
Let $\sigma=\tau_1\otimes\cdots\otimes \tau_n$, where $\tau_i$ is the canonical involutions of $H_i$ for $1\le i\le n$. 
For $n \ge 1$, let 
$a_n=\frac{4}{5}+\frac{1}{5}(\frac{9}{4})^{n}$, $b_n=-\frac{1}{5}+\frac{1}{5}(\frac{9}{4})^{n}$ and $c_n=\frac{1}{5}+\frac{3}{10}(\frac{9}{4})^{n}$.

We proceed by induction. 
For $n =1$, by   \cite[3.4]{Mah} and \cite[2.10]{Leep-sys}  we have   $u^+(H_1)\le a_1u(k)$, by
 \cite[ch.~10, 1.7]{Sch}, we have   $u^-(H_1)\le b_1u(k)$ and by  \cite[4.4]{PS}, we have   $u^0(H_1/k)\le c_1u(k)$. 

Suppose $u^+(H_1\otimes_k\cdots \otimes_kH_n)\le a_nu(k)$, $u^-(H_1\otimes_k\cdots \otimes_kH_n)\le b_nu(k)$ and $u^0(H_1\otimes_k\cdots \otimes_kH_n/k)\le c_nu(k)$. 

Let $H_1, \cdots , H_{n+1}$ be quaternion algebas over $k$, $\tau_i$ the canonical involution of $H_i$
and $\sigma = \tau_1 \otimes \cdots \otimes \tau_{n+1}$ on $A = H_1 \otimes \cdots \otimes H_{n+1}$.  
Since $H_{n+1}$ is a quaternion algebra and $\tau_{n+1}$ is the canonical involution,  
 there exist $\lambda_{n+1}, \mu_{n+1} \in H_{n+1}^*$ \ST $\tau_{n+1}(\lambda_{n+1})=-\lambda_{n+1}$, 
 $\tau_{n+1}(\mu_{n+1})=-\mu_{n+1}$, $\lambda_{n+1}\mu_{n+1} =-\mu_{n+1}\lambda_{n+1}$ 
 and $k(\lambda_{n+1})/k$ is a quadratic extension. Let $\lambda = 1 \otimes \cdots \otimes 1 \otimes \lambda_{n+1}
 \in A$, $\mu =   1 \otimes \cdots \otimes 1 \otimes \mu_{n+1}
 \in A$ and $\tilde A$ be the centralizer of $k(\lambda)$ in $A$.
 Then $\tilde{A} =  H_1\otimes  \cdots \otimes H_{n} \otimes k(\lambda)$.
 Let 
 $\sigma_1=\sigma |_{\tilde A}$ and $\sigma_2 = \Int(\mu^{-1})\circ \sigma_1$. 
By \cite[3.1, 3.2]{Mah}, we have 
$\sigma_1$ is unitary, $\sigma_2$ and $\sigma$ are of the same type and 
\[\begin{array}{lll}
u(A, \sigma, \varepsilon) & \le & \min \{ u(\tilde{A}, \sigma_1, \varepsilon) + \frac{1}{2}u(\tilde{A}\otimes k(\lambda), 
\sigma_2, -\varepsilon),  \\
& &  \frac{1}{2}u(\tilde{A}\otimes k(\lambda), 
\sigma_1, \varepsilon) + u(\tilde{A}\otimes k(\lambda), 
\sigma_2, -\varepsilon) \} .
\end{array}
\]
 
Since $\sigma_1$ is unitary and $\tilde{A} = H_1\otimes_k\cdots\otimes_k H_{n}\otimes k(\lambda)$,
by the induction hypothesis, we have 
 $u(\tilde{A},\sigma_1, \varepsilon) \le c_n u(k)$.
By \cite[4.2]{PS}, $u(\tilde{A}, \sigma_2, -\varepsilon) = 
u(H_1\otimes_k\cdots\otimes_k H_{n}\otimes k(\lambda), \sigma_2, -\varepsilon)\le \frac{3}{2}u(H_1\otimes_k\cdots\otimes_k H_{n}, \tau_1\otimes  \cdots \otimes  \tau_n, -\varepsilon)$. 

Since both $\sigma$ and $\tau_1\otimes  \cdots \otimes \tau_n$ are   of the first kind and of different types, we have
\[u^+(H_1\otimes_k\cdots\otimes_k H_{n+1})\le \min\{\frac{1}{2}(\frac{3}{2}a_n)+c_n, \frac{3}{2}a_n+\frac{1}{2}c_n\}u(k)=(\frac{3}{4}a_n+c_n)u(k)=a_{n+1}u(k),\]
\[u^-(H_1\otimes_k\cdots\otimes_k H_{n+1})\le \min\{\frac{1}{2}(\frac{3}{2}b_n)+c_n, \frac{3}{2}b_n+\frac{1}{2}c_n\}u(k)=(\frac{3}{2}b_n+\frac{1}{2}c_n)u(k)=b_{n+1}u(k).\]
Finally by \cite[4.3]{PS}, 
$
u^0(H_1\otimes_k\cdots\otimes_k H_{n+1}\otimes_kK/k)\le\min\{\frac{1}{2}a_{n+1}+b_{n+1}, a_{n+1}+\frac{1}{2}b_{n+1}\}u(k)=(\frac{1}{2}a_{n+1}+b_{n+1})u(k)=c_{n+1}u(k) 
$. 
Here \cref{5.1} was used in all three calculations. 
\end{proof}

\begin{rem*}
When $n=2$, $a_2=\frac{29}{16}$ is the same as that of \cite[4.5]{PS}, $b_2=\frac{13}{16}$ is smaller than the bound $\frac{17}{16}$ of \cite[4.6, 4.7]{PS}. 
When $k$ is a semi-global field, $u^-(D)\le\lfloor\frac{13}{2}\rfloor= 6$ is smaller than the bound $8$ of \cite[4.8]{PS}. 

When $n\ge 3$, $a_n$ is smaller than the bound  $\frac{3^{2n-6}}{4^n}\cdot 213$ of \cite[4.10, 4.11]{PS}.
\end{rem*}

%
%

\bibliographystyle{amsalpha}
\bibliography{wu}



\end{document}